\pgfplotsset{compat=1.15}
\DeclareMathOperator{\Span}{Span}
\DeclareMathOperator{\Min}{min}
\DeclareMathOperator{\gn}{gen}
\DeclareMathOperator{\Gn}{Gen}
\DeclareMathOperator{\Ban}{Ban}
\DeclareMathOperator{\GC}{GC}
\DeclareMathOperator{\EC}{EC}
\DeclareMathOperator{\Soc}{soc}
\theoremstyle{plain}
\newtheorem{thrm}{Theorem}[section]
\newaliascnt{cor}{thrm}
\newtheorem{cor}[cor]{Corollary}
\newaliascnt{prop}{thrm}
\newtheorem{prop}[prop]{Proposition}
\newaliascnt{lem}{thrm}
\newtheorem{lem}[lem]{Lemma}
\newaliascnt{conj}{thrm}
\theoremstyle{definition}
\newaliascnt{defn}{thrm}
\newtheorem{defn}[defn]{Definition}
\newaliascnt{rem}{thrm}
\newtheorem{rem}[rem]{Remark}
\newaliascnt{exm}{thrm}
\newaliascnt{fact}{thrm}
\crefname{thrm}{Theorem}{Theorems}
\crefname{theorem}{Theorem}{Theorems}
\crefname{lem}{Lemma}{Lemmas}
\crefname{cor}{Corollary}{Corollaries}
\crefname{prop}{Proposition}{Propositions}
\crefname{defn}{Definition}{Definitions}
\crefname{exm}{Example}{Examples}
\crefname{rem}{Remark}{Remarks}
\crefname{conj}{Conjecture}{Conjectures}
\crefname{quest}{Question}{Questions}
\crefname{section}{Section}{Sections}
\crefname{equation}{\unskip}{\unskip}
\crefname{enumi}{\unskip}{\unskip}
\crefname{subsection}{Subsection}{Subsections}
\newcommand{\lb}{\lambda}
\newcommand{\CC}{\mathbb{C}}
\newcommand{\NN}{\mathbb{N}}
\begin{document}
	\title[Applications of compact multipliers to algebrability]{\texorpdfstring{Applications of compact multipliers to algebrability of $(\ell_{\infty}\setminus c_0)\cup\{0\}$ and $(B(\ell_2)\setminus \mathcal{K}(\ell_2))\cup\{0\}$.}{Applications of compact multipliers to algebrability of (l_infty\c_0)\cup{0} and (B(l_2)\K(l_2))\cup{0}.}}

	\author{W. Franca}
	\address{Departamento de Matem\'atica, ICE - Universidade Federal de Juiz de Fora, 36036-330 Juiz de Fora MG, Brazil.}
	\email{wilian.franca@ufjf.br}

 \author{Jorge J. Garc{\' e}s}
	\address{ Departamento de Matem{\' a}tica Aplicada a la Ingenier{\' i}a Industrial, ETSIDI, Universidad Polit{\' e}cnica de Madrid, Madrid, Spain}
	\email{j.garces@upm.es}
	
	\subjclass[2020]{ Primary: 46L05, 47L40, 46B87. Secondary: 46B45, 46B26}
	\keywords{Compact Multipliers; Abelian C$^*$-algebras; Uniform algebras; Separable function spaces; Algebrability.}
	
	\begin{abstract}
We introduce a generator-counting refinement of algebrability for abelian $C^*$-algebras and related Banach algebras. Given an abelian $C^*$-algebra $A$, we define $(C^*)$-genalgebrability in terms of the minimal possible cardinality of a generating set, encoded by the invariants $\gn_{C^*}(A)$ and $\gn(A)$. Using compact multipliers and the ideal $K(A)$ of compact elements, we develop embedding results into $\ell_\infty$ whose ranges avoid $c_0$ (except for the zero vector), and we obtain a universal $(\ell_\infty\setminus c_0)$-embeddability phenomenon under the assumption $K(A)=\{0\}$. As an application, we construct a $^*$-isomorphic copy of $\ell_\infty$ inside $(\ell_\infty\setminus c_0)\cup\{0\}$ and transfer the results to Calkin-type settings such as $(B(\ell_2)\setminus \mathcal K(\ell_2))\cup\{0\}$ and their unitizations. We also establish a generator-counting theorem for abelian $C^*$-algebras: $\gn_{C^*}(A)$ equals the smallest cardinal $n$ for which the spectrum $\Delta(A)$ embeds into $\mathbb R^n$, and we derive topological formulas for $\gn_{C^*}(A)$ in the non-finitely generated case. Finally, we provide a complete classification of the pairs $(d,\kappa)$ for which $(\ell_\infty\setminus c_0)\cup\{0\}$ is $(d,\kappa)$-$(C^*)$-genalgebrable, and we discuss the connection with classical algebrability.
	\end{abstract}
	
	\maketitle
	
	
\section*{Introduction} 
In recent decades, \emph{lineability} has become a central topic in analysis, particularly in functional analysis. Given a subset $L$ of a vector space $X$---typically consisting of ``pathological'' objects or, more generally, of elements sharing a special property---one asks whether $L\cup\{0\}$ contains a large linear structure, for instance an infinite-dimensional subspace. When such a subspace can be chosen closed in $X$, one speaks of \emph{spaceability}. The modern theory traces back to the seminal result of Gurariy \cite{Gurariy66}, who proved that the set of continuous nowhere differentiable functions on $[0,1]$ contains an infinite-dimensional subspace; later, Fonf et al.~\cite{Fonf1999} strengthened this by showing that the same set even contains an infinite-dimensional \emph{closed} subspace.

Since then, a vast literature has developed. Classical sequence spaces provide one particularly fruitful setting in which to uncover new phenomena in lineability theory (see, for instance, \cite{Aizpuru2006,Araujo2017,BOTELHO2011,Cariello_Sepulveda,FONF201494,accumulation_points,Axarlis2021}), but the scope of the subject is much broader: lineability techniques also interact with many other parts of analysis, including $L_p$-spaces \cite{Botelho2012}, Lebesgue measurable functions \cite{Araujo2017}, classes of linear operators \cite{Diniz2021}, operator ideals in quasi-Banach spaces \cite{Aires2025}, spaces of Lipschitz functions \cite{Aviles2024}, and probabilistic questions such as convergence of random variables \cite{Araujo2024}. Moreover, there are also applications beyond these core areas; for example, \cite[Section~7]{Raposo2023} uses spaceability to study the size and geometry of sets of $m$-linear forms failing Hardy--Littlewood type inequalities for non-admissible exponents. For a panoramic view, we refer to \cite{BPS14} and the monograph \cite{A.B.P.S}.

We would be remiss not to mention that, in recent years, some authors have begun to investigate more restrictive notions of lineability. The motivation is that, in many classical settings, once a set of ``pathological'' objects is known to be nonempty, it often turns out to contain unexpectedly large linear structures; thus genuinely natural examples of non-lineable sets can be difficult to produce. This scarcity has led to refined concepts designed to measure linear structure more delicately and to provide a meaningful hierarchy between mere nonemptiness and the existence of large subspaces (see the introduction of \cite{EmerickFranca2026} and the references therein). In this paper, we contribute to this refinement program by introducing a generator-counting variant of algebrability.

Historically, the terms \emph{lineable} and \emph{spaceable} were coined by Aron, Gurariy, and Seoane-Sep\'ulveda \cite{AGS05} (see also Enflo, Gurariy, and Seoane-Sep\'ulveda \cite{EGS14}). The stronger notion of \emph{algebrability} was introduced by Aron, P\'erez-Garc\'ia, and Seoane-Sep\'ulveda \cite{APS06} and later refined by Bayart--Quarta \cite{BQ07} and Bartoszewicz--G\l\k{a}b \cite{BG13}; see also \cite[Definition~V.3]{A.B.P.S} for further variants and Definition~\ref{def classical algebrable} for a concrete example.
To date, the literature on algebrability has largely focused on the cardinality of an infinite minimal generating set of the algebra under consideration. From this standpoint, however, technical difficulties can arise: an algebra may fail to admit a minimal generating set, and even when it does, minimal generating sets can have different cardinalities.

Although our results are connected to the classical notion of algebrability, the methods developed in this paper allow us to control the number of generators of a given abelian C$^*$-algebra $A$. For this reason, we work with a variant of algebrability, since the standard Definition~\ref{def classical algebrable} is phrased in terms of infinite minimal generating sets.
More precisely, we introduce \emph{$($C$^{*})$-genalgebrability} (Definitions~\ref{def genalgebrable} and~\ref{defn.alg.}), defined via the smallest possible cardinality of a generating set for $A$, either as a C$^*$-algebra or as an algebra---denoted by $\gn_{C^*}(A)$ and $\gn(A)$, respectively. These cardinals always exist (see \cref{rem Gen c* are sets}) and the definition does not rely on the existence of a minimal generating set. This point is essential because we treat a broad class of abelian C$^*$-algebras for which the existence of minimal generating sets is, in general, delicate.

The paper is organized as follows.

\Cref{Section 1} fixes notation and records preliminary facts about compact elements in a Banach algebra (that is, those $a$ for which the operator $U_a(x)=axa$ is compact). More concretely, we characterize the existence of nonzero compact elements in a semisimple commutative Banach algebra in terms of the existence of isolated points in its Gelfand spectrum. The ideal of compact elements of a Banach algebra (denoted by $K(A)$) plays a crucial role in \Cref{section 2}.

The second section is divided into three acts. We begin \cref{section 2} by studying which uniform algebras embed into $\ell_\infty$. For an abelian $C^*$-algebra $A$, this is completely characterized by separability of its spectrum $\Delta(A)$ (\cref{lem linf-embedable}), whereas for general uniform algebras we obtain only a sufficient condition.

The second step is to study when a uniform algebra (and, more generally, an algebra or Banach space) admits an embedding into $\ell_\infty$ whose range is disjoint from $c_0$ except for the zero vector. Using that $K(\ell_\infty)=c_0$, we show in \cref{main lineab banch algebra1} that whenever a uniform algebra $A$ embeds (in the appropriate sense) into $\ell_\infty$ and $K(A)=\{0\}$, there exists an embedding $\phi$ such that $\phi(A)\cap c_0=\{0\}$. Moreover, the assumption $K(A)=\{0\}$ forces every embedding $\phi\colon A\to\ell_\infty$ to satisfy $\phi(A)\cap c_0=\{0\}$. We call this phenomenon \emph{universal $(\ell_{\infty}\setminus c_{0})$-embeddability}.

We then use a simple topological argument to show that if $C_0(L)$ is universally $(\ell_{\infty}\setminus c_{0})$-embeddable, then so is $C_b(L)$ (\cref{thm extend algebrability to M(A)}). In \cref{trm l_inf in l_inf--c0}, as an application of \cref{thm extend algebrability to M(A)}, we build a $^*$-isomorphic copy of $\ell_\infty$ inside $(\ell_{\infty}\setminus c_{0})$ and deduce the following: every algebra, uniform algebra, or Banach space that embeds into $\ell_\infty$ also admits an embedding that avoids $c_0$ (see also the comments after \cref{trm l_inf in l_inf--c0}).

The third act transfers our results from $(\ell_{\infty}\setminus c_{0})\cup\{0\}$ to other settings. First, using the \emph{diagonal} embedding, we pass to $(B(\ell_2)\setminus K(\ell_2))\cup\{0\}$ (\cref{cor abelian triple in B(H)}).
Next, we treat the corresponding \emph{unitized} settings: we show that every algebra, uniform algebra, or Banach space that embeds into $\ell_\infty$ also embeds into $\ell_\infty$ and into $B(\ell_2)$ in a way that avoids $c$ and $K(\ell_2)\oplus \CC 1_{B(\ell_2)}$ (\cref{thrm l-c embedd} and \cref{cor B-(K+c) emebd}). Under an additional assumption, we also obtain universal embeddability results for these sets ( see also \cref{can not be finitely generated}).

The first part of \cref{section 3} studies the quantities $\gn(A)$ and $\gn_{C^*}(A)$. We show that if $A$ is an infinitely generated C$^*$-algebra, then $\gn_{C^*}(A)$ coincides with the smallest cardinal $n$ such that its spectrum $\Delta(A)$ admits an embedding into $\mathbb{R}^{n}$ (\cref{teo genCstar}); the finitely generated version of this result is due to Nagisa \cite[Proposition 2]{Nagisa}. Next, we give a formula for $\gn_{C^*}(A)$ in terms of topological invariants, namely the weight of $\Delta(A)$ and the density character of $A$ (\cref{prop num gen C*}).

Second, combining these results with suitable choices of compact Hausdorff spaces $K_\kappa$ and the results of \cref{section 2}, we give in \cref{teo n-alg and n-ctar-alg} a complete answer to when $(\ell_{\infty}\setminus c_0)\cup\{0\}$ contains a $^*$-isomorphic copy of a C$^*$-algebra $A$ with $d=\dim(A)$ and $\gn_{C^*}(A)=\kappa$ (that is, when it is $(d,\kappa)$-C$^*$-genalgebrable). Notably, we do so without assuming the existence of minimal generating sets. Moreover, \cref{teo n-alg and n-ctar-alg} includes a purely algebraic counterpart, based on $\gn(A)$ instead of $\gn_{C^*}(A)$. We obtain the following complete classification:
Let $M=(\ell_{\infty}\setminus c_0)\cup \{0\}$. Then:
\begin{enumerate}
    \item $M$ is $(d,1)$-genalgebrable and $(d,1)$-C$^*$-genalgebrable for every $d\in \mathbb{N}$.
    \item  If $d,n<\infty$ and $n\neq 1$, then $M$ is neither $(d,n)$-genalgebrable nor $(d,n)$-C$^*$-genalgebrable.
    \item  $M$ is $(\mathfrak{c},\kappa)$-C$^*$-genalgebrable for every $\kappa\leq \mathfrak{c}$. 
    \item  $M$ is $(\aleph_0,\kappa)$-genalgebrable for every $\kappa\leq \aleph_0$.
    \item  $M$ is $(\kappa,\kappa)$-genalgebrable for every $\aleph_0 < \kappa \leq \mathfrak{c}$. 
    \end{enumerate}

It is important to note that the results above, stated in terms of $\gn(A)$ and $\gn_{C^*}(A)$, do not automatically transfer to classical $(\alpha,\beta)$-algebrability (or to $(\alpha,\beta)$-C$^*$-algebrability, its C$^*$-algebraic analogue introduced in \cref{missing}), since when these invariants are infinite the existence of a minimal generating set is not guaranteed. Nevertheless, for the specific algebras used in the proof of \cref{teo n-alg and n-ctar-alg} we can compute minimal generating sets, which allows us to transfer the conclusions to $(\alpha,\beta)$-algebrability and $(\alpha,\beta)$-C$^*$-algebrability, except for those pairs $(d,n)$ with $d<\infty$ and $n\in\NN\setminus\{1\}$, where the statements differ; in this regime the answer becomes affirmative (see \cref{2-teo n-alg and n-ctar-alg}). This happens because minimal generating sets of different cardinalities may exist in that case.

Finally, \cref{cor c*algebrable} shows that every affirmative algebrability result for $(\ell_{\infty}\setminus c_0)\cup\{0\}$ (in any of the senses considered) also holds for the sets
$$((B(\ell_2)\setminus \mathcal{K}(\ell_2))\cup \{ 0\}, (\ell_{\infty}\setminus c)\cup \{0\}, (B(\ell_2)\setminus (\mathcal{K}(\ell_2) )\oplus 1_{B(\ell_2)}))\cup \{0\}.$$

\section{Notation and a preliminary result}\label{Section 1}
For Banach spaces $X$ and $Y$, we write $B(X,Y)$ for the space of all bounded linear operators from $X$ to $Y$. It is well known that $B(X,Y)$ is a Banach space when endowed with the operator norm
$$\|T\|:=\sup_{x\in B_X}\|T(x)\|,\qquad B_X:=\{x\in X: \|x\|\le 1\}.$$
As usual, we write $\mathcal{K}(X,Y)$ for the set of compact operators in $B(X,Y).$ When $X=Y$, we adopt the conventions $B(X):=B(X,X)$ and $\mathcal{K}(X,X):=\mathcal{K}(X)$; these are then Banach algebras.

Throughout this work all the algebras are associative. Let $A$ be a Banach algebra.
For each $a\in A$, we define $L_a\in B(A)$ by $L_a(x):=ax.$ Operators of the form $L_a$ are called (left) multipliers. It is straightforward to check that $\|L_a\|=\|a\|$ whenever $A$ is unital (and $\|1_A\|=1$). We say that $a$ is a \emph{compact} (respectively, \emph{weakly compact}) left multiplier if $L_a$ is a compact (respectively, weakly compact) operator. Compact and weakly compact right multipliers are defined analogously via $R_a(x)=xa$. According to the introduction of \cite{Ylinen_wcc}, if $a$ is a weakly compact left multiplier in a C$^*$-algebra $A$, then $a$ is also a weakly compact right multiplier, and conversely.

Following \cite{AlexanderCompact}, we say that $a$ is \emph{compact} when the operator $U_a\colon A\to A$, $x\mapsto axa$, is compact. We denote by $K(A)$ the set of compact elements of $A$. By \cite[Theorem 3.1]{Ylinen_wcc}, if $A$ is a C$^*$-algebra, then $a\in A$ is compact if and only if $a$ is a weakly compact left (equivalently, right) multiplier. It is also known that $K(A)$ is a norm-closed, two-sided ideal of $A$ (see the introduction of \cite{Ylinen_wcc}). Actually, it is not hard to see that this conclusion is true for Banach algebras in general. We now show that bounded isomorphisms between Banach algebras preserve left/right compact multipliers and compact elements.

\begin{prop} \label{compact.operator}
   Let  $\Phi:A\to B$ be a bounded isomorphism of Banach algebras. Fix $a\in A.$ Then the following assertions hold:
   \begin{enumerate}
       \item \label{preserve compact 1} $a$ is a compact element in $A$ if and only if $\Phi(a)$ is a compact element in $B.$
      \item \label{preserve compact 2}  $a$ is a left (right) compact multiplier in $A$ if and only if $\Phi(a)$ is a left (right) compact multiplier  in $B.$
   \end{enumerate}
\end{prop}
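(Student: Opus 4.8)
The plan is to reduce both equivalences to a single observation: the multiplier operators attached to $a$ and to $\Phi(a)$ are conjugate to one another through $\Phi$. First I would note that, since $A$ and $B$ are complete and $\Phi$ is a bounded bijective homomorphism, the open mapping theorem ensures that $\Phi^{-1}$ is bounded as well; hence $\Phi$ is a topological isomorphism with both $\Phi$ and $\Phi^{-1}$ continuous. This boundedness of $\Phi^{-1}$ is exactly what will let conjugation by $\Phi$ preserve compactness in both directions.

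Next I would establish the intertwining identities, using only that $\Phi$ is multiplicative and bijective. Given $y\in B$, set $x=\Phi^{-1}(y)$; then
\[
\Phi(a)\,y\,\Phi(a)=\Phi(a)\Phi(x)\Phi(a)=\Phi(axa)=\Phi\big(U_a(\Phi^{-1}(y))\big),
\]
so that $U_{\Phi(a)}=\Phi\circ U_a\circ\Phi^{-1}$. The same computation, dropping one of the two factors of $\Phi(a)$, gives $L_{\Phi(a)}=\Phi\circ L_a\circ\Phi^{-1}$ and $R_{\Phi(a)}=\Phi\circ R_a\circ\Phi^{-1}$ for the left and right multipliers.

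With these identities in hand, both assertions follow from the fact that the compact operators form a two-sided ideal inside the algebra of bounded operators on any Banach space: pre- and post-composing a compact operator with bounded maps again yields a compact operator. Thus, if $U_a$ is compact then $U_{\Phi(a)}=\Phi\circ U_a\circ\Phi^{-1}$ is compact; conversely, rewriting the identity as $U_a=\Phi^{-1}\circ U_{\Phi(a)}\circ\Phi$ shows that compactness of $U_{\Phi(a)}$ forces compactness of $U_a$. This settles assertion (i), and the identical reasoning applied to the identities for $L_{\Phi(a)}$ and $R_{\Phi(a)}$ settles assertion (ii).

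I do not expect a genuine obstacle in this argument; it is in essence a conjugation-invariance statement. The only points that require care are the appeal to the open mapping theorem guaranteeing that $\Phi^{-1}$ is bounded — without which one direction of each equivalence could fail — and the careful bookkeeping of the three intertwining identities, all of which rest on the homomorphism property of $\Phi$.
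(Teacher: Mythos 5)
Your proof is correct and takes essentially the same approach as the paper: both arguments hinge on the intertwining identity $\Phi\circ U_a\circ\Phi^{-1}=U_{\Phi(a)}$ (and its one-sided analogues for $L_a$, $R_a$) together with the open mapping theorem supplying boundedness of $\Phi^{-1}$. The only difference is presentational --- you transfer compactness via the two-sided ideal property of compact operators, whereas the paper runs the same identity through the sequential characterization of compactness, proving one direction explicitly and leaving the converse to symmetry; your packaging is, if anything, slightly cleaner.
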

   \begin{proof}
We prove only $(i)$, since the proof of $(ii)$ is analogous.
Let us suppose that $a$ is a compact element in $A$. Let $(b_k)_{k\in\mathbb{N}}$ be a bounded sequence in $B.$ Since $\Phi^{-1}$  is also continuous, then $(c_k)_{k\in\mathbb{N}}=(\Phi^{-1}(b_{k}))_{k\in\mathbb{N}}$ is a bounded sequence in $A.$ From the compactness of the operator $U_a,$ we  conclude that $(U_a(c_k))_{k\in\mathbb{N}}$ contains a convergent subsequence in $A.$ We can assume without loss of generality that the sequence $(U_a(c_k))_{k\in\mathbb{N}}$  converges to some $c\in A.$ Hence, $\Phi(U_a(c_k) )=U_{\Phi(a)} b_k$ converges to $\Phi(c),$  showing that $ \Phi(a)$ is a compact element in $B.$ The converse follows by applying the same argument to $\Phi^{-1}$.
\end{proof}

Let $L$ be a locally compact Hausdorff space, and let
$$C(L)=\{f\colon L\to\mathbb{C}: f \ \mbox{is continuous}\}.$$
We say that $f\in C(L)$ \emph{vanishes at infinity} if, for every $\epsilon>0$, there exists a compact set $K=K(\epsilon)\subset L$ such that $|f(x)|<\epsilon$ whenever $x\notin K.$ We write $C_0(L)$ for the set of all functions in $C(L)$ that vanish at infinity. It is well known that $C_0(L)$, equipped with pointwise addition and multiplication and the sup norm, is a commutative C$^*$-algebra; here $f^{*}(t)=\overline{f(t)}$ for each $t\in L$ and $f\in C(L).$ In particular, $C_0(L)$ is non-unital. We denote by $C_b(L)$ the algebra of bounded continuous functions on $L$, which is a unital C$^*$-algebra. If $K$ is compact Hausdorff, then $C(K)$ is also a unital C$^*$-algebra.

In this paragraph, let $A$ be a commutative Banach algebra. The set
$$\Delta(A)=\{\phi\in A^{*}\setminus\{0\}: \phi \ \mbox{is multiplicative}\}$$
is called the spectrum (also known as the structure space or maximal ideal space) of $A.$ We regard $\Delta(A)$ as a topological space endowed with the weak$^{*}$-topology. By \cite[Proposition 17.2]{BD}, $\Delta(A)$ is locally compact and Hausdorff (and compact if $A$ is unital).
Moreover, if $A$ is a semisimple commutative Banach algebra, then the Gelfand transform
\begin{align*}
    \hat{}\colon A &\to C_0(\Delta(A))\\
    a&\mapsto \hat{a}\colon \Delta(A)\to \mathbb{C}, \quad \text{where } \hat{a}(\phi)=\phi(a),
\end{align*}
is a contractive injective homomorphism (see \cite[Corollary 17.7]{BD}). However, it need not be isometric, nor must it have closed range, even in the semisimple case. By the uniqueness of the norm topology on a semisimple Banach algebra (see \cite{Johnson}), the Gelfand transform has closed range if and only if there exists $k>0$ such that $k\|a\|\le \|\hat{a}\|$ for all $a\in A$.

It is also well known that the Gelfand transform is isometric if and only if $\|a^2\|=\|a\|^2$. Throughout the paper, we call a commutative Banach algebra satisfying this identity a \textit{uniform algebra}, even when it is not unital. If $A$ is an abelian C$^*$-algebra, then the Gelfand transform is a $^*$-isomorphism. In this setting, the existence of compact elements in a commutative C$^*$-algebra is characterized by the existence of isolated points in $\Delta(A)$. As we see below, the same is true for semisimple Banach algebras.

\begin{lem}\label{lem no compact elements}
    Let $A$ be a  semisimple commutative Banach algebra. Then $K(A)=\{0\}$ if and only if $\Delta(A)$ has no isolated points.   
 \end{lem}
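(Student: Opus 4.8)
The plan is to prove both implications by contraposition, working through the Gelfand representation. Since $A$ is a uniform algebra, the Gelfand transform $a\mapsto\hat a$ is an isometric isomorphism of $A$ onto a closed subalgebra $\hat A\sst C_0(\Delta(A))$. As $A$ is commutative, the operator defining compactness simplifies: $U_a(x)=axa=a^2x$, so $U_a=L_{a^2}$ and, under the Gelfand identification, $L_b$ becomes the multiplication operator $f\mapsto \hat b\,f$ on $\hat A$. I would freely use these identifications together with Shilov's idempotent theorem.

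First I would show that an isolated point forces a nonzero compact element. If $\phi_0\in\Delta(A)$ is isolated, then $\{\phi_0\}$ is a compact open subset of $\Delta(A)$, hence a clopen subset of the one-point compactification $\Delta(A_1)=\Delta(A)\cup\{\phi_\infty\}$, where $A_1$ is the unitization of $A$. Shilov's idempotent theorem then yields an idempotent $e\in A_1$ with $\hat e=\chi_{\{\phi_0\}}$; since $\hat e(\phi_\infty)=0$ the scalar part of $e$ vanishes, so in fact $e\in A$ and $e\neq 0$. For every $x\in A$ one computes $\widehat{ex}=\hat e\,\hat x=\phi_0(x)\hat e$, whence $ex=\phi_0(x)e$; thus $L_e$ (and likewise $U_e=L_e$, since $e^2=e$) is a rank-one, hence compact, operator. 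Therefore $e\in K(A)\setminus\{0\}$.

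For the converse I would start from a nonzero compact element $a$ and set $b=a^2$, which is nonzero because $\|a^2\|=\|a\|^2$, and for which $L_b=U_a$ is compact. Each character $\phi\in\Delta(A)$ satisfies $\phi(bx)=\hat b(\phi)\phi(x)$, i.e. $L_b^*\phi=\hat b(\phi)\phi$, so every nonzero value of $\hat b$ is an eigenvalue of the compact operator $L_b^*$. Consequently the set of nonzero values of $\hat b$ is discrete with $0$ as its only possible accumulation point. Choosing $\phi_1$ with $c=\hat b(\phi_1)\neq 0$, the level set $S=\hat b^{-1}(\{c\})$ is then open (as $c$ is isolated among the values of $\hat b$) and compact (as $\hat b\in C_0$ and $|\hat b|\equiv|c|>0$ on $S$). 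A second application of Shilov's theorem produces an idempotent $e\in A$ with $\hat e=\chi_S$, and one checks $eA=\ker(L_b-cI)$, which is finite-dimensional because $L_b$ is compact and $c\neq 0$. Since restriction to $S$ identifies $eA$ with the subalgebra $\hat A|_S\sst C(S)$, the latter is a finite-dimensional uniform algebra containing the constants and separating the points of $S$ (the latter because $\hat A$ separates points of $\Delta(A)$); being finite-dimensional and semisimple it is isomorphic to $\mathbb{C}^k$, so it admits only finitely many characters and $S$ must be finite. A finite open subset of a Hausdorff space consists of isolated points, so $\phi_1$ is isolated in $\Delta(A)$.

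The main obstacle is the converse, and specifically the passage from the analytic hypothesis (compactness of the multiplier $L_b$) to the topological conclusion (an isolated point). The spectral observation $L_b^*\phi=\hat b(\phi)\phi$ is what converts compactness into the discreteness of the nonzero values of $\hat b$ and thereby opens the level set $S$; the delicate part is then showing $S$ is finite rather than merely a clopen compact set, which is where Shilov's idempotent theorem is essential: it localizes the algebra onto $S$, turning the finite-dimensionality of the eigenspace $\ker(L_b-cI)$ into finite-dimensionality of $\hat A|_S$, and the structure theory of finite-dimensional semisimple commutative algebras then pins down $|S|<\infty$. I would also take care to verify at each use of Shilov's theorem that the idempotent actually lies in $A$ rather than merely in its unitization.
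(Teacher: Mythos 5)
Your proof is correct, but it is genuinely different from the paper's. The paper disposes of the lemma in a few lines by citing Kamowitz: if $\Delta(A)$ has no isolated points, a uniform algebra has no nonzero compact multipliers, so for a compact element $a$ the operator $U_a=L_{a^2}$ forces $a^2=0$ and then $\|a\|^2=\|a^2\|=0$; the converse (isolated point $\Rightarrow$ nonzero compact element) is delegated to the comments following Kamowitz's main theorem. You share the paper's key reduction --- passing from the compact element $a$ to the compact multiplier $L_b$ with $b=a^2\neq 0$ via the uniform-norm identity --- but instead of citing Kamowitz you reprove the multiplier theorem's content from scratch: the forward direction by manufacturing a rank-one idempotent multiplier through Shilov's idempotent theorem (with the correct care that $\hat e(\phi_\infty)=0$ places $e$ in $A$, not just in the unitization), and the converse through the spectral identity $L_b^*\phi=\hat b(\phi)\phi$, Schauder's theorem, and Riesz--Schauder discreteness, then localizing via a second Shilov idempotent to identify $eA=\ker(L_b-cI)$ with $\hat A|_S\cong \CC^k$ and conclude $|S|<\infty$. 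All steps check out (openness of $S$ from the isolation of $c$ among the values of $\hat b$, compactness from $\hat b\in C_0$, semisimplicity of $\hat A|_S$ since a function algebra has no nilpotents, and isolation of $\phi_1$ since a finite open set in a Hausdorff space is discrete). What each approach buys: the paper's is short and rests on the literature; yours is self-contained and exposes the mechanism, at the cost of length. One streamlining worth noting: finiteness of $S$ follows already from the fact that distinct characters are linearly independent and every $\phi\in S$ is an eigenvector of the compact operator $L_b^*$ for the same nonzero eigenvalue $c$, whose eigenspace is finite-dimensional --- this bypasses the second use of Shilov's theorem and the Wedderburn step entirely (though you still need the first spectral argument to get $S$ open).
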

\begin{proof} First we assume that $\Delta(A)$ has no isolated points.
By \cite{Kamowitz}, $A$ has no nonzero compact left (and hence right) multipliers. If $a\in A$ is a compact element, then $a^2$ is a compact left multiplier, and hence $a^2=0$. It follows that $\hat{a}(t)=0$ for all $t\in\Delta(A)$, so $\hat{a}=0$ and, by injectivity of the Gelfand transform, $a=0$. Thus $K(A)=\{0\}$.

Conversely, assume that $\Delta(A)$ has an isolated point. By the comments following the proof of the main theorem in \cite{Kamowitz}, $A$ admits a compact multiplier $L_a$ with $a\neq 0$. Then $U_a=L_a\circ R_a$ is compact, so $0\neq a\in K(A)$.
\end{proof}

By \cite[Corollary 17.7]{BD}, every uniform algebra is semisimple; hence \cref{lem no compact elements} applies in particular to uniform algebras.
 
Now, let $X$ be a separable Banach space. By \cite[Proposition 5.11]{F.H.H.P.Z}, there exists a sequence $(f_n)\subset X^*$ such that
\begin{align*}
    \|x\|=\sup_n \{|f_n(x)|\}
\end{align*}
for every $x\in X.$ Consequently, the linear map
\begin{equation}\label{eq countable norming set 2}
\begin{split}
T\colon X&\to \ell_{\infty}\\
x&\mapsto T(x)=(f_n(x))
\end{split}
\end{equation}
is an isometric isomorphism from $X$ onto a closed subspace of $\ell_{\infty}$. Although such an embedding always exists when $X$ is separable, we do not control the position of $T(X)$ inside $\ell_{\infty}.$ In this paper, we address this issue for certain uniform algebras, with particular attention to abelian C$^*$-algebras. 

\begin{defn}
Let $B$ be a C$^*$-algebra and $A$ be a vector space.
We say that an injective linear map $\phi:A\to B$ is a $B$-embedding of $A$ if $\phi$ preserves the relevant structure on $A$, for example:
\begin{itemize}
    \item $\phi$ is isometric, if $A$ is a Banach space;
    \item $\phi$ is an algebraic homomorphism, if $A$ is an algebra;
    \item $\phi$ is an isometric and algebraic homomorphism, if $A$ is a Banach algebra;
    \item$\phi$ is a $^*$-homomorphism, if $A$ is a C$^*$-algebra.
\end{itemize}
In this context, we say that $A$ is $B$-embeddable if there exists a $B$-embedding of $A$. 

We also use the term `copy' and the symbol $\cong$ following the same conventions as above.
\end{defn}

\begin{defn}
Let $A$ be a vector space and let $B$ be a C$^*$-algebra.
Let $X,Y\subset B.$ We say that $X$ \textit{avoids} $Y$ if $X\cap Y=\{0\}.$
We say that a $B$-embedding $\phi:A\to B$ avoids $Y$ if $\phi(A)$ avoids $Y.$
\end{defn}

We consider the following property.

\begin{defn}
  Let $A$ be a vector space, let $B$ be a C$^*$-algebra, and let $Y\subset B$.
   We say that $A$ is $(B\setminus Y)$-embeddable if there exists a $B$-embedding of $A$ that avoids $Y$. We say that $A$ is universally $(B\setminus Y)$-embeddable if $A$ is $(B\setminus Y)$-embeddable and every $B$-embedding of $A$ avoids $Y$. 
  When $Y$ is a vector space, we use the same terminology, even though $0$ belongs to both $Y$ and the embedded copy of $A$ in $B$.
\end{defn}

In the next section, we determine which algebras are $(\ell_{\infty}\setminus c_{0})$-embeddable and, among them, characterize those that are universally $(\ell_{\infty}\setminus c_{0})$-embeddable. We then use the \textit{diagonal embedding} to transfer these results to the noncommutative analogues of these sets.

\section{\texorpdfstring{\( (\ell_{\infty}\setminus c_{0}) \)-embeddable uniform algebras}{ell_infty minus c_0-embeddable uniform algebras}\label{section 2}}

As a first step in studying $(\ell_{\infty}\setminus c_{0})$-embeddability, we need to understand which algebras are $\ell_{\infty}$-embeddable. For abelian C$^{*}$-algebras, this can be characterized in terms of the Gelfand spectrum. Let $A$ be a commutative C$^{*}$-algebra and let $\phi:A\to\ell_{\infty}$ be an injective $^{*}$-homomorphism. For each $n$, let $\delta_n\in \Delta(\ell_\infty)$ denote the evaluation functional at the $n$th coordinate. Since $\phi$ is an injective $^*$-homomorphism, $\psi_n=\phi^*(\delta_n)=(\delta_n\circ\phi)\in \Delta(A)$ for all $n$. Moreover, we have:
$$\|a\|=\|\phi(a)\|=\sup_n |\phi(a)_n|=\sup_n |\delta_n(\phi(a))|=\sup_n |\psi_n(a)| .$$ We claim that
$R=\{ \psi_n:n\in \NN\}$ is dense in $\Delta(A).$ If there exists $x_0\in \Delta(A)\setminus \overline{R}$, then we can find $f\in C_0(\Delta(A))$ such that $f(x_0)=1$ and $f(R)=0.$ By Gelfand theory, there exists a unique $a\in A$ such that $\hat{a}=f.$ Since $f\neq 0$, we have $a\neq 0.$ On the other hand,
$$\|a\|=\sup_n |\psi_n(a)| =\sup_n |\hat{a}(\psi_n)|=0 ,$$
a contradiction. We have shown that if $A$ is an $\ell_{\infty}$-embeddable C$^{*}$-algebra, then $\Delta(A)$ is separable. Conversely, if $\Delta(A)$ is separable, take a dense subset $R=\{x_n: n\in\mathbb{N}\}\subseteq\Delta(A)$ and define
$T:C_0 (\Delta(A)) \to \ell_{\infty}$ by $T(f)=(f(x_n))_n.$ Clearly, $T$ is an injective $^{*}$-homomorphism between C$^{*}$-algebras and hence an $\ell_{\infty}$-embedding.

\begin{lem}\label{lem linf-embedable}
Let $A$ be an abelian Banach algebra. If $A$ is a C$^{*}$-algebra or a uniform algebra and $\Delta(A)$ is separable, then $A$ is $\ell_{\infty}$-embeddable. Moreover, if $A$ is a C$^{*}$-algebra, then the converse also holds.
\end{lem}
\begin{proof}
    The case in which $A$ is a C$^{*}$-algebra is standard; we included the details above for completeness. Now suppose that $A$ is a uniform algebra and $\Delta(A)$ is separable. Let $R=\{x_n\mid n\in\mathbb{N}\}$ be a dense subset of $\Delta(A).$ Let us define 
$T:C_0 (\Delta(A)) \to \ell_{\infty} ,T(f)=(f(x_n)).$ Clearly, $T$ is an injective $^*$-homomorphism between C$^*$-algebras and hence it is isometric.
Now define $G:A\to \ell_{\infty}$ given by $a\mapsto G(a)= T(\hat{a}),$ which is clearly an isometry (since $A$ is uniform) and a homomorphism. If $A$ is a C$^*$-algebra then the Gelfand transform is a $^*$-isomorphism and hence $G$ is an injective $^*$-homomorphism.
\end{proof}

\begin{rem}\label{snark comment}
The reader may wonder why \cref{lem linf-embedable} does not yield an equivalence for uniform algebras. The point is that, if $A$ is a closed subalgebra of $\ell_{\infty}$, there is a natural continuous map $\phi^*:\beta\NN\to\Delta(A)$, but it need not be surjective; hence, unlike the C$^{*}$-subalgebra case, separability of $\Delta(A)$ is not automatic (unless $A$ satisfies the Corona Theorem \cite{Gelfan_duality_NOT}).
\end{rem}

Commutative Banach algebras with separable spectrum include, in particular, all separable semisimple Banach algebras, as the following lemma shows (see also \cref{revisisting}).

\begin{lem}\label{lem Gelfand separable}
    Let $A$ be a commutative semisimple Banach algebra. Then $A$ is separable if and only if $C_0(\Delta(A))$ is separable. 
\end{lem}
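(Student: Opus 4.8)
The plan is to route both implications through the single topological condition that $\Delta(A)$ be second countable. This rests on two standard facts that I will take as the scaffolding: first, for a locally compact Hausdorff space $L$, the algebra $C_0(L)$ is separable if and only if $L$ is second countable; and second, a Banach space $X$ is separable if and only if $B_{X^*}$ is metrizable in the weak$^*$-topology (see \cite{F.H.H.P.Z}). Thus it is enough to prove that $A$ is separable if and only if $\Delta(A)$ is second countable, and the two facts above convert this into the stated equivalence.

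For the forward implication I would argue as follows. Suppose $A$ is separable, so that $(B_{A^*},\text{weak}^*)$ is metrizable. Every nonzero multiplicative functional on $A$ has norm at most $1$, and a weak$^*$-limit of such functionals is again multiplicative or zero; hence $\Delta(A)\cup\{0\}$ is a weak$^*$-closed, and therefore weak$^*$-compact, subset of $B_{A^*}$. As a subspace of a metrizable space it is metrizable, and a compact metrizable space is second countable. Consequently its subspace $\Delta(A)$ is second countable, and by the first scaffolding fact $C_0(\Delta(A))$ is separable. Note that this half uses neither semisimplicity nor any control on the Gelfand norm.

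For the converse, assume $C_0(\Delta(A))$ is separable, so $\Delta(A)$ is second countable. The Gelfand transform realizes $A$ as a subalgebra $\hat{A}$ of the separable algebra $C_0(\Delta(A))$, whence $\hat{A}$ is separable in the supremum norm. The main obstacle is exactly here: to pass from separability of the Gelfand image $\hat{A}$ to separability of $A$ in its \emph{original} norm. Because the Gelfand transform is only contractive and, as the excerpt already stresses, need not be bounded below, a supremum-dense countable subset of $\hat{A}$ need not be norm-dense in $A$; so the naive transfer of separability fails, and I expect this to be the decisive point. In the settings the paper actually needs, the difficulty is removed by an isometry: when $A$ is a uniform algebra (so that $\|a^2\|=\|a\|^2$) or an abelian C$^*$-algebra, the Gelfand transform is isometric, and then $A$, being isometrically isomorphic to the subspace $\hat{A}$ of the separable space $C_0(\Delta(A))$, is itself separable. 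To push the argument through in the merely semisimple case one must keep the multiplicative structure in play—for instance, extracting from a countable base of $\Delta(A)$ (using that second countable implies Lindel\"of) a countable family of elements of $A$ that already induces the Gelfand topology, and controlling the closed subalgebra it generates—so that the contractive representation cannot enlarge the density character. It is precisely this interaction between semisimplicity and the algebra norm, rather than any purely Banach-space estimate, that I anticipate will be the hard part of the proof.
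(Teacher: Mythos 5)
Your forward implication is, up to cosmetic reformulation, the paper's own proof: the paper likewise passes from separability of $A$ to weak$^*$-compactness and metrizability of $B_{A^*}$, concludes that $\Delta(A)$ is a separable, metrizable, locally compact space, and then quotes Chou for the separability of $C_0(\Delta(A))$; since a metrizable space is separable exactly when it is second countable, your route through second countability is the same argument. So that half is fine and matches the paper.

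The converse is where your proposal stops short of a proof, and your instinct was correct --- indeed more correct than your hedging suggests. The paper disposes of this direction in one line: ``the other implication is clear since, by applying the Gelfand transform, we can see $A$ as a subspace of the separable (metric) space $C_0(\Delta(A))$.'' That is precisely the naive transfer you warn against: it identifies $A$ with $\hat{A}$ \emph{as a topological subspace}, which tacitly assumes the Gelfand transform is bounded below, whereas for a general semisimple algebra it is only injective and contractive, so sup-norm separability of $\hat{A}$ says nothing about the original norm. Moreover, no repair along the lines you sketch (extracting a generating family from a countable base of $\Delta(A)$) can exist, because the implication is false at this level of generality. Take $A=\mathrm{Lip}[0,1]$, the Lipschitz functions with norm $\|f\|=\|f\|_{\infty}+L(f)$, where $L(f)$ is the Lipschitz constant: this is a commutative Banach algebra whose characters are exactly the point evaluations (Sherbert's theorem), so $\Delta(A)=[0,1]$, the Gelfand transform is the inclusion into $C[0,1]$ (hence $A$ is semisimple), and $C_0(\Delta(A))=C[0,1]$ is separable; yet the uncountable family $f_t(x)=|x-t|$, $t\in[0,1]$, satisfies $L(f_t-f_s)=2$ for $t\neq s$, so $A$ is non-separable. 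Thus \cref{lem Gelfand separable}, and the corresponding ``if'' parts of \cref{revisisting}, are false as stated; the correct hypothesis is the one you fall back on, namely that the transform be isometric or bounded below, as for uniform algebras and abelian C$^*$-algebras. This weaker statement is all the paper actually needs: only the forward direction is ever invoked for general semisimple algebras (e.g.\ in \cref{thm extend algebrability to M(A)}), while the converse is applied only to uniform and C$^*$-algebras, where the Gelfand transform is isometric and your two-line argument closes it.
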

\begin{proof}
   If $A$ is separable, then we may derive that $B_{A^*}$ is compact and metrizable (in the weak$^*$-topology) hence $B_{A^*}$ is separable. As a consequence,  $\Delta(A)$ is locally compact, metrizable and separable (because it is a subspace of a separable metric space). Thus,   $C_0(\Delta(A))$ is separable (see the second page in the introduction of \cite{C.Y.Chou}).

   The other implication is clear since, by applying the Gelfand transform, we can see $A$ as a subspace of the separable (metric) space $C_0(\Delta(A)).$  
\end{proof}

\begin{rem}\label{revisisting}
As a consequence of \cref{lem Gelfand separable} and \cite[Theorem 2.4]{C.Y.Chou}, a commutative semisimple Banach algebra $A$ is separable if and only if $\Delta(A)$ is metrizable and separable if and only if $\Delta(A)$ is $\sigma$-compact and metrizable if and only if $\Delta(A)$ is second countable. A typical example of nonseparable abelian C$^*$-algebra with separable spectrum is $\ell_\infty$, where $\Delta(\ell_\infty)=\beta \NN$ is separable but not metrizable.
\end{rem}

The next result shows that universal $(\ell_{\infty}\setminus c_{0})$-embeddability fails exactly when $A$ has a nonzero compact element.

\begin{thrm}\label{main lineab banch algebra1}
Let $A$ be an $\ell_\infty$-embeddable uniform algebra (or an abelian C$^*$-algebra). 
Then $A$ is universally  $(\ell_{\infty}\setminus c_{0})$-embeddable if and only if $K(A)=\{ 0\}.$
\end{thrm}
\begin{proof}
 Suppose first that $K(A)=\{0\}$ and let $\phi:A \to \ell_\infty$ be an embedding. It is well known that $K(\ell_{\infty})=c_{0}.$ Let us suppose that there exists $z\in \phi(A)\cap c_0$ with $z\neq 0.$ Observe that $\phi$ is an isometric homomorphism or an injective $^*$-homomorphism. In the latter case, $\phi$ is also isometric (actually, every injective $^*$-homomorphism between C$^*$-algebras is isometric). Since $\phi(A)$ is a closed subalgebra of $\ell_{\infty}$ then $U_z(\phi(A))\subseteq \phi(A).$ It is clear that the restriction of $U_z$ to $\phi(A)$ is a compact operator on $\phi(A)$, that is, $z$ is a compact element in $\phi(A)$. Since $\phi(A)$ is itself a Banach algebra and $\phi^{-1}\in B(\phi(A),A)$ (by the Open mapping Theorem), then from \cref{compact.operator} we see that $\phi^{-1}(z)\in K(A)=\{0\}$ and hence $z=0$.

 For the converse, assume that $K(A)\neq \{0\}$. We  construct an $\ell_\infty$-embedding whose range meets $c_0$. By \cref{lem no compact elements}, $\Delta(A)$ contains an isolated point $x_0$. By the \v{S}ilov idempotent theorem there exists an idempotent $e\in A$ such that $\hat e=\chi_{\{x_0\}}$ (in particular, $\hat e(x_0)=1$ and $\hat e(y)=0$ for all $y\in\Delta(A)\setminus\{x_0\}$).
 
 Define
 $$
 I_{x_0}:=\{\hat a\in \hat A : \hat a(x_0)=0\}.
$$
 Then $\hat A= I_{x_0}\oplus \CC\hat e$. Indeed, for every $\hat a\in \hat A$ we may write
$$
 \hat a=(\hat a-\hat a(x_0)\hat e)+\hat a(x_0)\hat e,
$$
 where $\hat a-\hat a(x_0)\hat e\in I_{x_0}$; moreover,
$$(\hat a-\hat a(x_0)\hat e)\,\hat e=0.$$
 Consequently, the two summands have disjoint supports, and hence
  $$
  \|\hat{a}\|=\|(\hat{a}-\hat{a}(x_0)\hat{e})+\hat{a}(x_0)\hat{e}\|_\infty=\max \{ \|\hat{a}-\hat{a}(x_0)\hat{e}\|_\infty,|\hat{a}(x_0)|\}.$$  Pulling back the Gelfand transform we have the decomposition $A=I\oplus \CC e,$ (where $\hat{I}=I$). Now every $a\in A$ writes in the form $(a-\hat{a}(x_0)e)+\hat{a}(x_0)e.$ Additionally, since the Gelfand transform is an isometric homomorphism, we have 
 $$(a-\hat{a}(x_0)e) \cdot e=0 \mbox{ and } \|a\|=\max\{\|a-\hat{a}(x_0)e\|, |\hat{a}(x_0)|\}.$$ Let $\phi:A \to \ell_\infty$ be an $\ell_\infty$-embedding. For each $a\in A$ we write $\phi(a)=(\phi(a)_n)$. Next, we define 
 $$\psi(a)=(\hat{a}(x_0),\phi(a-\hat{a}(x_0) e)_1,\phi(a-\hat{a}(x_0) e)_2,\dots).
 $$
We claim that $\psi$ is multiplicative. Indeed, we recall 
$A = I \oplus \CC e$ with $Ie=\{0\}
$. We write 
$a=a_0+\alpha e$ and  $b=b_0+\beta e$
where
$
a_0:=a-\hat a(x_0)e$ and $b_0:=b-\hat b(x_0)e\in I$
$
\alpha=\hat a(x_0),\ \beta=\hat b(x_0).$
Then $a_0e=b_0e=0.$ So,
$
ab=(a_0+\alpha e)(b_0+\beta e)=a_0b_0+\alpha\beta e.
$
In addition, 
$
\widehat{ab}(x_0)=\hat a(x_0)\hat b(x_0)=\alpha\beta.
$
Hence, $
ab-\widehat{ab}(x_0)e = a_0b_0.
$
Finally,
\begin{align*}
\psi(ab)_0 & = \widehat{ab}(x_0)=\alpha\beta=\psi(a)_0\,\psi(b)_0,\\
\psi(ab)_n & = \phi(a_0b_0)_n=\phi(a_0)_n\,\phi(b_0)_n=\psi(a)_n\,\psi(b)_n.
\end{align*}
Therefore $\psi(ab)=\psi(a)\psi(b)$ for all $a,b\in A$.

Hence,  $\psi$ is an (injective) homomorphism ($^*$-homomorphism in case $A$ is a C$^*$-algebra), then $\psi$ is a new embedding. Moreover, 
$$\psi(a)=\max \{|\hat{a}(x_0)|,\| \phi(a-\hat{a}(x_0) e)\|_{\infty}\}=\max \{|\hat{a}(x_0)|,\| a-\hat{a}(x_0) e\|\}   =\|a\|$$
thus $\psi$ is an isometric homomorphism. At last, observe that 
$$\psi(e)=(1,0,0,\dots)\in c_0,$$
completing the proof.
\end{proof}

In view of \cref{main lineab banch algebra1}, we obtain the following corollary.

\begin{cor}
   Let $A$ be a $\ell_\infty$-embeddable uniform algebra or abelian C$^*$-algebra 
   and $1\leq p<\infty.$ Then $A$ is $(\ell_{\infty}\setminus \ell_{p})$-embeddable. Moreover, $A$ is universally  $(\ell_{\infty}\setminus \ell_{p})$-embeddable if and only if $K(A)=\{ 0\}.$ 
\end{cor}

\subsection{\texorpdfstring{\((\ell_{\infty}\setminus c_{0})\)-embeddablility of mutipliers}{M(A)}}

Let $A$ be a commutative C$^*$-algebra. A map $T\colon A\to A$ is called a \textit{multiplier} if $T(ab)=aT(b)$ for all $a,b\in A.$ We denote by $M(A)$ the \textit{multiplier algebra} of $A$, that is,
$$
M(A)=\{T\in B(A): T\ \mbox{is a multiplier}\}.
$$
In case $A$ is unital then $M(A)=A.$ Otherwise, by  \cite{Busby68}[Theorem 2.11 and Proposition 3.1]  $M(A)$ is an abelian C$^*$-algebra and there is canonical embedding that identifies $A$ with a C$^*$-subalgebra (actually, as an ideal) of $M(A)$. It is well known that if $A=C_0(L)$ then $M(A)=C_b(L)\cong C(\beta L)$ (where $\beta L$ stands for the Stone--Čech compactification of $L$). Thus, if $A$ is a non-unital commutative C$^*$-algebra, so that $A\cong C_0(\Delta(A))$, then $M(A)\cong C_b(\Delta(A))\cong C(\beta\Delta(A)).$ In particular, if $\Delta(A)$ is separable then $\beta\Delta(A)$ is also separable, since $\Delta(A)$ is dense in $\beta\Delta(A).$ Moreover, the isolated points of $\beta\Delta(A)$ (if any) coincide with the isolated points of $\Delta(A)$ (because $\Delta(A)$ is open and dense in $\beta\Delta(A)$).
Consequently, $K(A)=\{0\} \Leftrightarrow K(M(A))=\{0\}$ by \cref{lem no compact elements}.

If $A$ is unital, then $M(A)=A$, so the conclusion holds in this case as well.
\begin{cor}\label{thm extend algebrability to M(A)}
   Let $A$ be a $\ell_\infty$-embeddable  
   abelian C$^*$-algebra. Then $M(A)$ is $\ell_{\infty}$-embeddable. 
     Moreover, $M(A)$ is universally $(\ell_{\infty}\setminus c_{0})$-embeddable if and only if $A$ is universally $(\ell_{\infty}\setminus c_{0})$-embeddable.
\end{cor}
\begin{proof}
    The result follows from \cref{lem linf-embedable}, \cref{main lineab banch algebra1} and the comments above.
\end{proof}

So far we know that every $\ell_\infty$-embeddable uniform algebra with $K(A)=\{0\}$ is $(\ell_{\infty}\setminus c_{0})$-embeddable. Surprisingly, we obtain the strongest possible statement: every subalgebra (and every closed subspace) of $\ell_{\infty}$ can be relocated to avoid $c_{0}$.
\begin{cor}\label{trm l_inf in l_inf--c0}
$\ell_\infty$ is $(\ell_{\infty}\setminus c_{0}) $-embeddable. As a consequence, every $\ell_\infty$-embeddable algebra and every $\ell_\infty$-embeddable Banach space is $(\ell_{\infty}\setminus c_{0}) $-embeddable. In particular, every subalgebra and every closed subspace of $\ell_\infty$ is $(\ell_{\infty}\setminus c_{0}) $-embeddable. 
\end{cor}
\begin{proof}
  Let $(I_{j})$ be a sequence of disjoint (non-degenerated) closed intervals on $\mathbb{R}.$  We define $\displaystyle{L=\bigcup_{j=1}^{\infty}I_j.}$ It is clear that $L$ is separable, locally compact Hausdorff and has no isolated points (so $K(C_0(L))=\{0\}$ by \cref{lem no compact elements}). 
 
 By \cref{lem linf-embedable} and \cref{main lineab banch algebra1} $C_0(L)$ is universally $(\ell_{\infty}\setminus c_{0})$-embeddable. Thus,  we can find a $^*$-isomorphic copy $B$ of $C_b(L)=M(C_0(L))$ inside $   
 M$ by  \cref{thm extend algebrability to M(A)}. Consequently,  there exists a $\ell_{\infty}$-embedding $\phi:C_b(L)\to \ell_\infty$ 
 that avoids $c_0$. Clearly, $C_b(L)$ contains a $^*$-isomorphic copy of $\ell_\infty.$ More concretely, the mapping $T: \ell_\infty \to C_b(L)$ given by $T((\lambda_n))=\sum_{n=1}^{\infty}\lambda_n \chi_{I_n}$ is an injective $^*$-homomorphism (the given sum is only formal, and that being said we have $T((\lambda_n))\cdot T((\beta_n))=\sum_{n=1}^{\infty}\lambda_n \chi_{I_n}\cdot \sum_{n=1}^{\infty}\beta_n \chi_{I_n}=\sum_{n=1}^{\infty}\lambda_n\beta_n \chi_{I_n}=T((\lambda_n)(\beta_n))$).
 Then $A_\infty:=\phi(T( \ell_\infty) )$ is the desired copy of $\ell_\infty$ that avoids $c_0.$

 We have just proved that there exists a $^*$-isomorphism $\psi_1:A_\infty \to \ell_\infty$ and an embedding $\psi_2:A_\infty \to \ell_\infty$ such that $\psi_2(A_\infty)\cap c_0=\{0\}.$ Now let $A$ be an algebra or a Banach space, and $\phi:A\to \ell_\infty$ be an embedding. Then $T:=\psi_2\circ \psi_1^{-1}\circ  \phi$ is an $\ell_\infty$ embedding of $A$ that avoids $c_0.$
 \end{proof}

The first statement of \cref{trm l_inf in l_inf--c0} already appears implicitly in the proof of \cite[Theorem 1.3]{D.P}. The authors also note that a direct proof of this claim was suggested by the referee during the review process. As an alternative, we present a slightly different approach, constructing the copy inside a subalgebra of $\ell_\infty$ that already avoids $c_0$.
Consequently, 
 there is no need to verify that the copy $A_\infty$ of $\ell_{\infty}$ avoids $c_0$.

Before proceeding to the next section, it is worth noting that one must also be careful when deciding whether an algebra $A$ is universally $(\ell_{\infty}\setminus c_{0})$-embeddable, since ``being a compact element'' is relative to the ambient algebra. For instance, a closed subalgebra of $\ell_\infty$ may have nonzero compact elements even if it is disjoint from $c_0$.
This happens for the algebra $A_\infty=\phi(T(\ell_{\infty}))$ in the proof of \cref{trm l_inf in l_inf--c0}, where $A_0:=K(A_\infty)=\phi(T(c_0))\cong c_0$ and $A_0\cap c_0=\{0\}$ (we are using \cref{compact.operator}). Thus, neither $A_0$ nor $A_\infty$ is universally $(\ell_{\infty}\setminus c_{0})$-embeddable.

\subsection{$(B(\ell_2)\setminus \mathcal{K}(\ell_2))$-embeddability } \vspace*{10pt}

Now, let us recall a couple of definitions regarding C$^*$-algebras. For a given C$^*$-algebra $A,$ we write $A_{+}=\{a^{*}a: a\in A\}\subset A_{sa}=\{a\in A: a=a^{*}\}$ - here  $A_{+}$= the set of all \emph{positive} elements, and $A_{sa}$= the set of all \emph{self-adjoint elements}. In this setting an element $p\in A_{sa}$ satisfying  $p^{2}=p$ is called a \emph{projection}. Two projections $p,q\in A$ are \textit{orthogonal} if $pq=0.$
Finally, we say that a C$^*$-algebra $B$ is a \textit{von-Neumann algebra} if there exists a Banach space $B_{*}$ such that $B$ is the dual space of $B_{*}$ - this modern way of defining von-Neumann algebras is due to Sakai's theorem.

 A projection $p$ in a C$^*$-algebra $A$ is said to be \textit{minimal} if $pAp=\CC p.$ The \textit{socle} of $A$ ($\Soc(A)$) is the linear span of the set of minimal projections of $A.$ By \cite[Theorems 3.8 and 4.2]{Ylinen-compactCstar} (note that in \cite{Ylinen-compactCstar} minimal projections are called \textit{1-dimensional}), the ideal of compact elements of $A,$ $K(A),$ coincides with the closure of the socle of $A$. We denote by $\ell_2$ the complex Hilbert space $\ell_2(\NN)$  endowed with inner product $\langle f,g\rangle =\sum_n f(n) \overline{g(n)}.$ It is also well known that the mapping $\Phi\colon\ell_{\infty}\to B(\ell_2)$, defined by $f\mapsto \Phi(f)$ with $\Phi(f)(g)=fg$, is an injective unital $^*$-homomorphism (called the \textit{diagonal embedding}). Moreover, $\Phi(\ell_{\infty})$ is a MASA\footnote{it is not properly contained in any other abelian subalgebra of $B(\ell_2)$}  (\textit{maximal abelian subalgebra})   of $B(\ell_2)$. Before proceeding we want to recall that $K(B(\ell_{2}))=\mathcal{K}(\ell_{2}).$ 
Henceforth, throughout this section, $\Phi$ denotes the diagonal embedding $\Phi\colon \ell_{\infty}\to B(\ell_2)$ defined above.

 We claim that $K(\Phi(\ell_{\infty}))=\mathcal{K}(\ell_2)\cap \Phi(\ell_{\infty}).$ Indeed, if $p\in \Phi(\ell_{\infty})$ is a minimal projection, then by \cite[Lemma 3.3]{Kozminder} $p$ is also a minimal projection in $B(\ell_2)$, and hence $p\in \mathcal{K}(\ell_2)\cap \Phi(\ell_{\infty}).$ Thus $\Soc(\Phi(\ell_{\infty}))\subseteq \mathcal{K}(\ell_2)\cap \Phi(\ell_{\infty})$, and therefore $K(\Phi(\ell_{\infty}))\subseteq \mathcal{K}(\ell_2)\cap \Phi(\ell_{\infty}).$ The reverse inclusion is clear. This, together with \cref{compact.operator}, yields
 \begin{equation}\label{last?}
\Phi(c_0)=\Phi(K(\ell_\infty))=K(\Phi(\ell_\infty))=\mathcal{K}(\ell_2)\cap \Phi(\ell_\infty).
 \end{equation}

 Now, if $B \subset \ell_\infty$ satisfies $B\cap c_0=\{0\}$, then $\Phi(B)\cap K(\Phi(\ell_{\infty}))=\Phi(B)\cap \mathcal{K}(\ell_2)=\{0\}.$ Finally, we can compose $\Phi$ with the $(\ell_{\infty}\setminus c_{0}) $-embeddings obtained in \cref{trm l_inf in l_inf--c0}  to prove the next corollary. 

\begin{cor}\label{cor abelian triple in B(H)}
Every $\ell_\infty$-embeddable algebra and every $\ell_\infty$-embeddable Banach space is $(B(\ell_2)\setminus \mathcal{K}(\ell_2))$-embeddable. 
\end{cor}

\subsection{Obtaining some embeddibilities avoiding $c$ and $\mathcal{K}(\ell_2)\oplus \CC 1$}

All our results on $(\ell_{\infty}\setminus c_0)$-embeddability can also be transferred to the sets $(\ell_{\infty}\setminus c)$ and $B(\ell_2)\setminus\bigl(\mathcal{K}(\ell_2)\oplus \CC 1\bigr)$, where $\mathcal{K}(\ell_2)\oplus \CC 1$ denotes the compact perturbations of the identity operator (equivalently, the \textit{unitization} of $\mathcal{K}(\ell_2)$). Hereafter we denote $\mathcal{K}(\ell_2)\oplus \CC 1$ by $\widetilde{\mathcal{K}}(\ell_2)$.

\begin{thrm}\label{thrm l-c embedd} 
    Every $\ell_\infty$-embeddable algebra or Banach space is $(\ell_\infty\setminus c)$-embeddable. 
\end{thrm}
\begin{proof} 
Take $A_\infty=\phi(T(\ell_{\infty}))\cong\ell_{\infty}$ as in the proof of \cref{trm l_inf in l_inf--c0}, and let $\rho\colon A_{\infty}\to\ell_{\infty}$ be an $(\ell_\infty\setminus c_0)$-embedding, writing $\rho(a)=(\rho(a)_n)$. Define $\psi_2\colon A_\infty \to \ell_{\infty}$ by
$$
\psi_2(a)=\bigl(\rho(a)_1,0,\rho(a)_2,0,\rho(a)_3,0,\dots\bigr).
$$
Clearly, $\psi_2$ is an isometric $^*$-homomorphism.

We claim that $\psi_2(A_\infty)\cap c_0=\{0\}$. Indeed, let $b=(b_n)=\psi_2(a)$ and suppose that $\lim_n b_n=0$. Then $\lim_n \rho(a)_n=0$, so $\rho(a)\in c_0$, and hence $a=0$ because $\rho(A_{\infty})$ avoids $c_0$. Therefore $\psi_2(A_\infty)\cap c_0=\{0\}$.

Now suppose that $b=\psi_2(a)\in c$. Since $b_{2n}=0$ for all $n$ and $(b_n)$ converges, we must have $\lim_n b_n=0$, so $b\in c_0$ and hence $b=0$. Thus $\psi_2(A_\infty)\cap c=\{0\}$.

Next, let $A$ be an algebra or Banach space, let $\phi\colon A\to \ell_\infty$ be an embedding, and let $\psi_1\colon A_\infty \to \ell_\infty$ be a $^*$-isomorphism. Then
$$T:=\psi_2\circ \psi_1^{-1}\circ \phi$$
is an $\ell_\infty$-embedding of $A$ whose range avoids $c$.
\end{proof}

The existence of a unit is the main obstruction to universal $(\ell_{\infty}\setminus c)$-embeddability for infinite-dimensional uniform algebras. Indeed, if $\phi\colon A\to \ell_\infty$ is an embedding, then $\phi(1)=p=(p_n)$ is an idempotent ($p^2=p$) and $\phi(A)=\phi(A)p$. In particular, each $p_n\in\{0,1\}$.

If $p$ has finite support, then  $\phi(A)p$ is subalgebra of $\ell_{\infty}$ formed by just finitely many nonzero coordinates, and hence $A\cong \CC^n$, a contradiction. Thus $p$ has infinite support $S\subseteq\mathbb{N}$. Let $\sigma\colon S\to\mathbb{N}$ be a bijection, and define a new embedding $\psi\colon A\to\ell_\infty$ by
$$
\psi(a)=(\psi(a)_n)_{n\in\mathbb{N}},\qquad \psi(a)_n:=\phi(a)_{\sigma^{-1}(n)}.
$$
Then $\psi(1)=1_{\ell_\infty}\in c$. Hence, a unital infinite-dimensional uniform algebra is never universally $(\ell_{\infty}\setminus c)$-embeddable. This reflects the fact that $c$ is precisely the unitization of $c_0$. 

In the finite-dimensional case, this impossibility follows from \cref{main lineab banch algebra1} together with the fact that $K(A)=A$.

\begin{lem}\label{lem geral princpla K+C}
Let $B$ be a unital Banach algebra, and let $A$ be a Banach subalgebra of $B$.
Assume that $K(B)\neq \{0\}$ and $K(A)=\{0\}.$ Then:
\begin{enumerate}
\item \label{lem geral princpla K+C i}$A\cap K(B)=\{0\};$
\item \label{lem geral princpla K+C ii}if  $A\cap \bigl(K(B)\oplus \CC 1_B\bigr)\neq\{0\},$ then $A$ has a multiplicative identity.
\end{enumerate}
\end{lem}
\begin{proof}
\cref{lem geral princpla K+C i}Let us suppose that there exists $0\neq z\in A\cap K(B).$ Since $z\in K(B)$, so the  operator $U_z:B\to B$ is compact. Moreover, since $A$ is a (closed) subalgebra, we have $U_z(A)\subset A$, and hence the restriction $U_z\vert_A:A\to A$ is compact. 
Thus $K(A)\neq \{0\}$, a contradiction. It follows that $A\cap K(B)=\{0\}.$ 

\cref{lem geral princpla K+C ii}
For the remainder of the proof, set $M:=K(B)\oplus \CC 1_B$. Assume there exists $0\neq z\in A\cap M$ and write $z=k+\lambda 1_B$. If $\lambda=0$, then $0\neq z\in A\cap K(B)=\{0\}$, a contradiction; hence $\lambda\neq 0$.

 We claim that $w:=\frac{1}{\lambda}z=\frac{1}{\lambda}k+1_B\in A$ is a unit for $A$. Note that $w-1_B=\frac{1}{\lambda}k\in K(B)$. Since $A$ is a subalgebra and $K(B)$ is an ideal of $B$, for every $a\in A$ we have
 \begin{align*}
   aw-a &= a(w-1_B)\in A\cap K(B)=\{0\},\\
   wa-a &= (w-1_B)a\in A\cap K(B)=\{0\}.
 \end{align*}
Therefore $aw=wa=a$ for all $a\in A$, so $A$ contains a multiplicative identity (with unit $w\in A$). The proof is now complete.
\end{proof}

For our next result, recall that for any element $x=(x_n)\in c$, if $\lambda:=\lim_n x_n$, then $z=x-\lambda 1_{\infty}\in c_0$. Consequently, $x=z+\lambda 1_{\infty}$ for some $z\in c_0$. In particular, $c=K(\ell_\infty)\oplus \CC 1_\infty$.

\begin{cor}\label{cor char l-c univ embed} 
Let $A$ be an $\ell_\infty$-embeddable nonunital uniform algebra. $A$ is universally $(\ell_\infty\setminus c)$-embeddable if and only if $K(A)=\{0\}$.
\end{cor}
 \begin{proof}
 First observe that $A$ is  $(\ell_\infty\setminus c)$-embeddable by \cref{thrm l-c embedd}.
 
If $K(A)\neq\{0\}$, then there exists an $\ell_\infty$-embedding $\phi\colon A\to\ell_\infty$ with $\phi(A)\cap c_0\neq\{0\}$. In particular, $\phi(A)\cap c\neq\{0\}$.

Now assume that $K(A)=\{0\}$, and let $\phi\colon A\to\ell_\infty$ be an embedding. If $\phi(A)\cap\bigl(K(\ell_\infty)\oplus \CC 1_{\infty}\bigr)=\phi(A)\cap c\neq\{0\}$, then $\phi(A)$ has a multiplicative identity by \cref{lem geral princpla K+C}. Since $A\cong \phi(A)$, it follows that $A$ is unital, a contradiction.
  \end{proof}

Let $A=C(\Delta(A))$ be a unital and universally $(\ell_\infty\setminus c_0)$-embeddable C$^*$-algebra. Since $K(A)=\{0\}$, the space $\Delta(A)$ has no isolated points; hence $\Delta(A)\setminus \{x\}$ is locally compact and noncompact for every $x\in \Delta(A)$. Moreover, $\Delta(A)\setminus \{x\}$ has no isolated points either. Since every maximal ideal $I$ of $A$ is $^*$-isomorphic to $C_0(\Delta(A)\setminus \{x\})$ for some $x\in \Delta(A)$, we see that $K(I)=\{0\}$. Moreover, by \cref{cor char l-c univ embed}, every maximal ideal of $A$ is universally $(\ell_\infty\setminus c)$-embeddable. Thus, for any embedding $\phi\colon A\to\ell_\infty$, there are subalgebras (actually, ideals) of $A$ with co-dimension equal to $1$ that still avoid $c$.

\begin{cor}\label{cor B-(K+c) emebd} (See \cref{cor abelian triple in B(H)})
Every $\ell_\infty$-embeddable algebra and every $\ell_\infty$-embeddable Banach space is $\bigl(B(\ell_2)\setminus \widetilde{\mathcal{K}}(\ell_2)\bigr)$-embeddable.
\end{cor}

\begin{proof}
Let $X\subseteq\ell_{\infty}$ and $\Phi:\ell_\infty \to B(\ell_2)$ the diagonal embedding. We claim that
\begin{align}\label{eq intersct k+c}
\Phi(X)\cap\widetilde{\mathcal{K}}(\ell_2)=\Phi(X\cap c).  
\end{align}
Indeed, let $b\in X$ and assume that $\Phi(b)=k+\lambda I$ with $k\in \mathcal{K}(\ell_2)$ and $\lambda\in\CC$. Since $\Phi(1_{\ell_{\infty}})=I$, we obtain (see also \cref{last?})
$$
\Phi\bigl(b-\lambda 1_{\ell_{\infty}}\bigr)=k\in \mathcal{K}(\ell_2)\cap\Phi(\ell_{\infty})=\Phi(c_0).
$$
Hence $b-\lambda 1_{\ell_{\infty}}\in c_0$, and therefore $b\in c$. The reverse inclusion is immediate.

Now if $A$ is $\ell_{\infty}$-embeddable then by \cref{thrm l-c embedd} there exists an embedding  $\phi:A\to\ell_{\infty}$ that avoids $c.$ Let us define $\widetilde{\Phi}:=\Phi\circ\phi:A\to B(\ell_2).$ Then $\widetilde{\Phi}$ avoids $\widetilde{\mathcal{K}}(\ell_2)$ by \cref{eq intersct k+c}, since $\Phi(\phi(A))\cap\widetilde{\mathcal{K}}(\ell_2)=\Phi(\phi(A)\cap c)$.
\end{proof}

 We end this section with a sufficient condition for universal $\bigl(B(\ell_2)\setminus \widetilde{\mathcal{K}}(\ell_2)\bigr)$-embeddability. The proof is a direct application of \cref{lem geral princpla K+C} with $B=B(\ell_2).$ 

\begin{cor}\label{cor B-(K+c) univ emebd} 
Let $A$ be a $B(\ell_2)$-embeddable uniform algebra. If $K(A)=\{0\}$ then $A$ is universally  $\bigl(B(\ell_2)\setminus \mathcal{K}(\ell_2)\bigr)$-embeddable. Moreover, if $A$ is non-unital then $A$ is universally $\bigl(B(\ell_2)\setminus \widetilde{\mathcal{K}}(\ell_2)\bigr)$-embeddable.
\end{cor} 

\section{Algebrabilities and its variations}\label{section 3}

In this section, we study generators of abelian C$^*$-algebras with the aim of translating our embeddability results into precise statements on the algebrability of the sets considered in Section~\ref{section 2}.

\subsection{Generators}

\begin{defn}
Let $S$ be a nonempty set. We denote by $S^{<w}$ the set of \textit{finite sequences (or words) with entries in $S$}; equivalently,
$$
S^{<w}=\bigcup_{n\in \NN} S^n.
$$
Moreover, $|S^{<w}|=\max\{\aleph_0,|S|\}=\aleph_0\cdot|S|$.
\end{defn}

\begin{defn}
Let $A$ be a Banach algebra and let $\emptyset\neq S\subseteq A$. Set $A(S)=\Span(S^{<w})$, the (algebraic) subalgebra of $A$ generated by $S$, and $\Ban(S)=\overline{A(S)}$, the Banach subalgebra of $A$ generated by $S$. We say that $S$ generates $A$ as an algebra (respectively, as a Banach algebra) if $A=A(S)$ (respectively, $A=\Ban(S)$).
  \end{defn}

Given a subset $S$ of a C$^*$-algebra $A$, the C$^*$-subalgebra generated by $S$ is $C^*(S):=\Ban(S\cup S^*)$. In what follows, however, we adopt the convention (see the introduction of \cite{Nagisa}) of working only with generating sets consisting of self-adjoint elements, both for C$^*$-algebras and for self-adjoint (i.e., $*$-closed) subalgebras of C$^*$-algebras. Thus, when computing $\gn(A)$ or $\gn_{C^*}(A)$ (see \cref{rem Gen c* are sets}) in the sequel, we restrict to generating sets of self-adjoint elements.

\begin{defn}\label{classical definition}
Let $A$ be a C$^*$-algebra and let $S\subseteq A_{sa}$. We say that $S$ generates $A$ as a C$^*$-algebra if $A=\Ban(S)$. More generally, if $B$ is a self-adjoint subalgebra of $A$ and $S\subseteq B$, then $S$ generates $B$ as an algebra if $B=A(S)$.
\end{defn}
When no confusion can arise, we simply say that $S$ generates $A$ (or $B$), omitting the qualifiers ``as a C$^*$-algebra'' and ``as an algebra''.

\begin{rem} \label{rem Gen c* are sets}
Let $A$ be a commutative algebra. Denote by $\Gn(A)$, $\Gn_{\mathrm{ban}}(A)$, and $\Gn_{C^*}(A)$ the collections of all subsets of $A$ that generate $A$ as an algebra, as a Banach algebra, and as a C$^*$-algebra, respectively (whenever $A$ is endowed with the corresponding additional structure).
These collections are sets, by the axiom schema of specification applied to $\mathcal{P}(A)$. 

Define $\GC(A)=\{|S|: S\in \Gn(A)\}$.  
Note that $\GC(A)$ is the image of a well-defined function on the set $\Gn(A)$. Hence, by the axiom schema of replacement, $\GC(A)$ is a set of cardinal numbers. Similarly, we define the sets $\GC_{\mathrm{ban}}(A)$ and $\GC_{C^*}(A)$.
Additionally, by \cite[Theorem 23]{Kaplansky}, the sets $\GC(A)$, $\GC_{\mathrm{ban}}(A)$, and $\GC_{C^*}(A)$ are well ordered (with respect to the usual ordering of cardinals).   
Hence, we define $\gn(A)$ (respectively, $\gn_{\mathrm{ban}}(A)$ and $\gn_{C^*}(A)$) to be the least cardinality of a generating set of $A$ as an algebra (respectively, as a Banach algebra and as a C$^*$-algebra). 
 \end{rem}

\begin{rem}\label{rem non-self gen star}
Allowing generating sets that are not necessarily self-adjoint can reduce the required cardinality. More precisely, if $\gn_{C^*}(A)<\infty$, then the least cardinality of a generating set for $A$ as a C$^*$-algebra is $n$ when $\gn_{C^*}(A)=2n$, and $n+1$ when $\gn_{C^*}(A)=2n+1$ ($n\in\NN$). Indeed, if $S\in\Gn_{C^*}(A)$ with $S=\{s_1,\dots,s_{2n}\}\subset A_{sa}$, define $S':=\{s_j+\mathbf{i}s_{j+1}: j\in\{1,\dots,2n\}\text{ is odd}\}$. Since $(s_j+\mathbf{i}s_{j+1})^*+(s_j+\mathbf{i}s_{j+1})=2s_j$, the set $S'$ generates $A$, but $S'\nsubseteq A_{sa}$; hence $|S'|=n$, and this is the least cardinality of a system of generators of A
as a C$^*$-algebra. The case $|S|=2n+1$ is similar. Finally, for infinitely generated C$^*$-algebras, allowing non-self-adjoint generators does not change the least possible cardinality. The analogous statement for self-adjoint subalgebras $B$ of C$^*$-algebras with $\gn(B)<\infty$ also holds.
\end{rem}

For further discussion of this restriction, see \cref{prop Nagisa gener}. 

\begin{lem}\label{can not be finitely generated}
Let $A$ be an algebra. Then the following estimates hold:
\begin{enumerate}
\item\label{can not be finitely generated i} $\dim(A)\leq |S|\cdot \aleph_0=|S^{<w}|$ for every $S\in \Gn(A)$;
\item\label{can not be finitely generated ii} $\dim(A)\leq \gn(A)\cdot \aleph_0\leq \aleph_0\cdot \dim(A)$;
\item\label{can not be finitely generated iii} if $\dim(A)>\aleph_0$, then $\gn(A)=\dim(A)$.
\end{enumerate}
In particular, an infinite-dimensional Banach algebra cannot be finitely generated as an algebra.
\end{lem}
\begin{proof}
\cref{can not be finitely generated i} and \cref{can not be finitely generated ii}
Fix $S\in \Gn(A)$. If $\gamma$ is a Hamel basis of $A$, then $\gamma\in \Gn(A)$, and hence $\gn(A)\leq \dim(A)$; in particular, $\gn(A)\cdot\aleph_0\leq \dim(A)\cdot\aleph_0$.  On the other hand, since $S^{<w}$ spans $A$, the axiom of choice yields a Hamel basis $\beta\subset S^{<w}$. Thus $\dim(A)= |\beta|\leq |S^{<w}|=\aleph_0\cdot |S|$ for all $S\in \Gn(A)$, and therefore $\dim(A)\leq \gn(A)\cdot \aleph_0$. This completes the proof of $(i)$ and $(ii)$.

\cref{can not be finitely generated iii} Next, if $\dim(A)>\aleph_0$  then, by $(ii),$ we have $\aleph_0<\dim(A)\leq \gn(A)\cdot\aleph_0\leq \dim(A)\cdot \aleph_0.$ Thus, $\dim(A)=\gn(A)\cdot\aleph_0=\gn(A)=\dim(A).$

Finally, it is well known the dimension of every infinite dimensional Banach space is at least $\mathfrak{c}.$ Thus, if $A$ is an infinite dimensional Banach algebra then $\gn(A)=\dim(A)\geq \mathfrak{c}$ by \cref{can not be finitely generated iii}.
 \end{proof}

A Banach algebra with no nonzero compact elements is infinite dimensional. Indeed, if $A$ is finite dimensional, then for every $a\in A$ the operator $U_a\colon A\to A$ is compact. Thus, if $A$ is a Banach algebra with $K(A)=\{0\}$, then $\dim(A)=\infty$, and hence $A$ is not finitely generated as algebra by \cref{can not be finitely generated}.
 Consequently, an universally $(\ell_\infty\setminus c_0)$-embeddable Banach algebra $B$ is not finitely generated. Moreover, by \cref{can not be finitely generated} we have $\mathfrak{c}\leq \gn(B)=\dim(B)\leq \mathfrak{c}=\dim(\ell_\infty)$ and hence $\gn(B)=\mathfrak{c}.$ This conclusion also holds for infinite dimensional $\ell_\infty$-embeddable commutative Banach algebras. We use this as a fact along the rest of the paper without explicitly mentioning it.

\begin{rem}\label{countable-generating}
     It is not hard to see that a C$^*$-algebra $A$ is separable if and only if $\gn_{C^*}(A)\leq \aleph_0$. Indeed, if $A$ is separable we may find a countable dense subset $X$ of $A.$ Consider $S=\{(x+x^{*}), \mathbf{i}(x-x^{*}): x\in X\}\subset A_{sa}.$ Note that $x=\frac{1}{2}(x+x^*)-\frac{\mathbf{i}}{2}(\mathbf{i}(x-x^*).$ Hence, $S\in \Gn_{C^*}(A)$ and $|S|=\aleph_{0}.$ Then, $\gn_{C^*}(A)\leq \aleph_0.$ Conversely,  if $\gn_{C^*}(A)\leq \aleph_0,$ we may find $S\subset A_{sa}$ where $|S|=\aleph_{0}$ and $\overline{A(S)}=A.$  The set $S^{<w}$ is countable. Consequently, 
     $\{\sum_{j=1}^n (\alpha_j +\beta_j\mathbf{i})x_j: n\in \NN,\alpha_j,\beta_j\in \mathbb{Q},x_j\in S^{<w}\}$
      is a countable dense subset of $A.$
     
\end{rem}

\subsection{Calculating \texorpdfstring{$\gn_{C^{*}}(A)$}{gnC*(A)} in terms of certain topological invariants}

The next result characterizes $\gn_{C^{*}}(A)$ for finitely generated C$^*$-algebras.
\begin{prop}\label{prop Nagisa gener}
    (\cite[Proposition~2]{Nagisa}) Let $A$ be a unital commutative C$^{*}$-algebra with spectrum $\Delta(A)$. Then
    \begin{equation*}
        \gn_{C^*}(A)=\Min\{m\in\mathbb{N} : \ \mbox{there is an embedding of} \ \Delta(A) \ \mbox{into} \ \mathbb{R}^{m} \}.
    \end{equation*}
\end{prop}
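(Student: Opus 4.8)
The plan is to push everything through the Gelfand--Naimark correspondence, reducing the statement to one about separation of points by real-valued continuous functions, which is then governed by the Stone--Weierstrass theorem. Since $A$ is unital and commutative, the Gelfand transform is a $^*$-isomorphism $A\cong C(K)$ with $K=\Delta(A)$ compact Hausdorff, under which $A_{sa}$ corresponds exactly to the real-valued continuous functions on $K$. First I would record the basic dictionary: an $m$-tuple $(h_1,\dots,h_m)\in (A_{sa})^m$ gives a continuous map
\[
\Psi\colon K\to\mathbb{R}^m,\qquad \Psi(\phi)=(\phi(h_1),\dots,\phi(h_m))=(\widehat{h_1}(\phi),\dots,\widehat{h_m}(\phi)),
\]
and conversely every continuous $\Psi\colon K\to\mathbb{R}^m$ arises in this way from its (real, hence self-adjoint) component functions. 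Thus $m$-tuples of self-adjoint elements are precisely the same data as continuous maps $K\to\mathbb{R}^m$.

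The heart of the argument is the equivalence: $\{h_1,\dots,h_m\}$ generates $A$ as a C$^*$-algebra if and only if the associated map $\Psi$ is injective. For the forward implication I would fix $\phi\neq\psi$ in $K$ and consider the equalizer $E=\{a\in A:\phi(a)=\psi(a)\}$. Because characters on a commutative C$^*$-algebra are continuous $^*$-homomorphisms, $E$ is a closed unital $^*$-subalgebra of $A$; if $\Psi(\phi)=\Psi(\psi)$ then $E$ contains each $h_j$, hence $E\supseteq C^{*}(\{h_1,\dots,h_m\})=A$, forcing $\phi=\psi$, a contradiction. So $\Psi$ separates the points of $K$. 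For the converse, if $\Psi$ is injective then $\widehat{h_1},\dots,\widehat{h_m}$ separate the points of $K$, so the unital $^*$-subalgebra they generate is dense in $C(K)$ by Stone--Weierstrass; taking norm closure gives $C^{*}(\{h_1,\dots,h_m\})=A$.

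To finish, I would invoke that $K=\Delta(A)$ is compact (which holds precisely because $A$ is unital) and $\mathbb{R}^m$ is Hausdorff, so any injective continuous $\Psi\colon K\to\mathbb{R}^m$ is automatically a homeomorphism onto its image, i.e.\ a topological embedding; conversely an embedding is in particular injective and continuous. Combining the two equivalences, the integers $m$ admitting a generating $m$-tuple of self-adjoint elements are exactly those for which $\Delta(A)$ embeds into $\mathbb{R}^m$, and taking the minimum over such $m$ yields the displayed formula for $\gn_{C^*}(A)$.

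The step I expect to require the most care is the interface with the unit in the Stone--Weierstrass argument. The clean version above uses the \emph{unital} C$^*$-subalgebra generated by the $h_j$; if one insists on generation without the unit, the Stone--Weierstrass criterion demands in addition that the generators vanish at no point of $K$, i.e.\ that $0\notin\Psi(K)$. This costs nothing for the cardinality count: given any embedding $\Psi\colon K\hookrightarrow\mathbb{R}^m$ with $m\ge 1$, a generic translate $\Psi+v$ (equivalently, replacing each $h_j$ by $h_j+v_j 1$) is still an embedding missing the origin, so the minimal $m$ is unaffected. The only other point to watch is that the passage from injectivity of $\Psi$ to a genuine embedding genuinely uses the compactness of $\Delta(A)$.
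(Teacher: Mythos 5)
Your proof is correct. The paper does not actually prove this proposition (it is quoted from Nagisa), but your argument follows essentially the same strategy the paper itself employs for the infinite-cardinal generalization in \cref{teo genCstar}: self-adjoint $m$-tuples correspond to continuous maps $\Delta(A)\to\mathbb{R}^m$ via the Gelfand transform, generation forces injectivity of the evaluation map (hence an embedding, since $\Delta(A)$ is compact and $\mathbb{R}^m$ Hausdorff), and conversely an embedding produces generators via Stone--Weierstrass. Two points of comparison are worth recording. First, for the direction ``generators give an embedding'' the paper invokes Rickart's canonical representation theorem, whereas your equalizer argument ($E=\{a\in A:\phi(a)=\psi(a)\}$ is a closed unital $^*$-subalgebra containing the generators) is self-contained and cleaner. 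Second, and more substantively, the vanishing-nowhere issue in Stone--Weierstrass: the paper's device in \cref{teo genCstar} is to adjoin the constant function $1$ to the coordinate family, which is cost-free only for infinite cardinals --- in the finite case it would inflate $m$ to $m+1$ --- so your translation trick (replace $h_j$ by $h_j+v_j 1$ with $-v\notin\Psi(K)$, which exists since $\Psi(K)$ is compact) is precisely the refinement required in the finitely generated setting, and it correctly reconciles the unital Stone--Weierstrass statement with the paper's unit-free notion of generation $A=\Ban(S)$ from \cref{classical definition}.
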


Let $A$ be a unital commutative C$^*$-algebra. We note that \cite{Nagisa} uses the notation $\gn(A)$ for our $\gn_{C^*}(A)$; in this paragraph we follow that convention.
Moreover, \cite{Nagisa} adopts the convention $\gn(A)=\infty$ when $A$ is not finitely generated. Our next goal is to obtain an appropriate version of \cref{prop Nagisa gener} for infinitely generated C$^*$-algebras, which will allow us to compute $\gn(A)$ and gives a clearer picture of the case $\gn(A)=\infty$.
From this point on, we revert to our standing convention from \cref{rem Gen c* are sets}.

\begin{thrm}\label{teo genCstar}
Let $A$ be a unital commutative C$^*$-algebra. Then,  the following equality holds: 
  \begin{equation*}
  \GC_{C^*}(A)=\{ \kappa\leq |C(\Delta(A))| : \kappa \  \mbox{is a cardinal, and} \ \exists  \Phi: \Delta(A) \to \mathbb{R}^{\kappa} \ \mbox{embedding}\}.
  \end{equation*}
In particular, 
 \begin{equation*}
        \gn_{C^*}(A)=\Min\{ \kappa\leq |C(\Delta(A))| : \kappa \  \mbox{is a cardinal, and} \ \exists  \Phi: \Delta(A) \to \mathbb{R}^\kappa \mbox{ embedding}\}.
    \end{equation*}
\end{thrm}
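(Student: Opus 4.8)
The plan is to pass through Gelfand duality to a purely topological statement about embeddings of the compact space $K:=\Delta(A)$, and to translate the notion of a self-adjoint generating set into that of a point-separating family of real functions via the Stone--Weierstrass theorem. Since $A$ is unital and commutative, the Gelfand transform is a $^*$-isomorphism $A\cong C(K)$ with $K$ compact Hausdorff, and under this identification $A_{sa}$ becomes $C(K,\mathbb{R})$. The central device is, for a set $S\subseteq C(K,\mathbb{R})$, the evaluation map
\[
e_S:K\to\mathbb{R}^{S},\qquad e_S(x)=(f(x))_{f\in S},
\]
where $\mathbb{R}^S$ carries the product topology and is homeomorphic to $\mathbb{R}^{|S|}$.

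First I would establish the key dictionary: \emph{$S$ generates $A$ as a C$^*$-algebra if and only if $e_S$ is a topological embedding of $K$ into $\mathbb{R}^{|S|}$}. On the one hand $e_S$ is injective precisely when $S$ separates the points of $K$, and a continuous injection from the compact space $K$ into the Hausdorff space $\mathbb{R}^{|S|}$ is automatically a homeomorphism onto its image (it is a closed map, since closed subsets of $K$ are compact); thus a point-separating $S$ yields an embedding, and conversely an embedding forces separation. On the other hand, by Stone--Weierstrass a point-separating self-adjoint family generates $C(K)$ (adjoining the unit, or equivalently translating the embedding away from the origin to arrange that $S$ vanishes nowhere, takes care of the non-unital form of the theorem). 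Hence generating sets of cardinality $\kappa$ correspond exactly to embeddings $K\hookrightarrow\mathbb{R}^{\kappa}$.

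With this dictionary I would prove the two inclusions defining $\GC_{C^*}(A)$. If $\kappa\in\GC_{C^*}(A)$, pick a generating $S\subseteq A_{sa}$ with $|S|=\kappa$; then $\kappa=|S|\le|A_{sa}|\le|A|=|C(K)|$ and $e_S$ embeds $K$ into $\mathbb{R}^{\kappa}$, placing $\kappa$ in the right-hand set. For the reverse inclusion, suppose $\kappa\le|C(K)|$ and $\Phi:K\to\mathbb{R}^{\kappa}$ is an embedding; its coordinate functions form a point-separating family $S_0\subseteq A_{sa}$ with $|S_0|\le\kappa$ which (after the harmless vanishing-nowhere adjustment) generates $A$. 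It remains to realize the cardinality \emph{exactly}: I would enlarge $S_0$ to a set $S\subseteq A_{sa}$ with $|S|=\kappa$, which is possible because $|A_{sa}|=|A|=|C(K)|\ge\kappa$ and adjoining further self-adjoint elements keeps the generating property (one has $C^*(S)\supseteq C^*(S_0)=A$). The displayed formula for $\gn_{C^*}(A)$ then follows immediately by taking the minimum of the two equal sets, the minimum being well defined by \cref{rem Gen c* are sets}.

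The genuinely routine ingredients are Gelfand duality and the compact-to-Hausdorff embedding fact. The main obstacle is the exact-cardinality bookkeeping in the reverse inclusion: an embedding into $\mathbb{R}^{\kappa}$ only supplies at most $\kappa$ \emph{distinct} coordinate functions (coordinates may coincide, which is unavoidable for finite $\kappa$, as with $t\mapsto(t,t)$), so one must pad up to exactly $\kappa$ self-adjoint generators — and this is precisely where the hypothesis $\kappa\le|C(\Delta(A))|$ together with the identity $|A_{sa}|=|A|$ (valid since $A\neq 0$ forces $|A_{sa}|\ge\mathfrak{c}$) is used. A secondary subtlety is reconciling the paper's definition of the C$^*$-algebra generated by $S$ with the form of Stone--Weierstrass being invoked, namely the vanishing-nowhere condition, which I would dispatch by translating the embedding off the origin.
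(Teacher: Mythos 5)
Your proposal is correct and follows essentially the same route as the paper: Gelfand duality, the coordinate-evaluation embedding (the paper cites Rickart's canonical representation where you use the elementary compact-into-Hausdorff argument), Stone--Weierstrass after arranging the coordinate family to vanish nowhere (the paper adjoins the constant $1$ where you translate the embedding off the origin), and padding a generating set up to cardinality exactly $\kappa$ via $\kappa\le|C(\Delta(A))_{sa}|$. The only organizational differences are that you work directly with $\mathbb{R}^{\kappa}$ --- exploiting that self-adjoint generators have real-valued Gelfand transforms, which lets you bypass the paper's detour through $\CC^{\kappa}$ and the homeomorphism $\CC^{\kappa}\cong\mathbb{R}^{\kappa}$ --- and that you treat finite and infinite cardinals uniformly instead of delegating the finitely generated case to \cref{prop Nagisa gener}.
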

\begin{proof}
If $A$ is finitely generated, then statement follows from \cref{prop Nagisa gener}. Thus, let us assume that $A$ is not finitely generated.

Let $S=(a_{\lambda})_{\lambda\in \Lambda}\in \Gn_{C^{*}}(A)$ be a set of generators of $A.$         By \cite[Theorem 3.1.10]{Rickart} the mapping
 $$\Phi_S: \Delta(A)\to \CC^{\Lambda},\varphi\mapsto (\hat{a_{\lambda}}(\varphi))$$ is a homeomorphism onto its image, that is, an embedding. The mapping $\Phi_S$ is said to be the \textit{canonical representation of $\Delta(A)$  induced by $S.$} Recall that $\Delta(A)$ is compact in this case.
 
 Next, let $\kappa$ be an infinite cardinal number with $\kappa\leq |C(\Delta(A))|.$  Let $X$ be a set with $|X|=\kappa,$ and for each $\lambda \in X$ we define  $f_{\lambda}:\Delta(A)\to \CC $ by $f_{\lambda}=\pi_{\lb} \circ\Phi,$ where $\Phi:  \Delta(A) \to \CC^{\kappa}$ is an embedding,  and $\pi_{\alpha}$ is the canonical projection. Observe that we cannot rule out the possibility that for some $\varphi\in \Delta(A)$ we have  $f_{\lambda}(\varphi)=0$ for all $\lambda \in X.$ For this reason we consider the family $\tilde{S}=\{f_{\lambda}: \lambda\in X\}\cup\{1\}.$ Thus the family $\tilde{S}$ vanishes nowhere, and $\tilde{S}\subset C(\Delta(A)).$
 Moreover, if $\varphi_1\neq\varphi_2\in\Delta(A)$ then, by the injectivity of $\Phi$, $\Phi(\varphi_1)\neq\Phi(\varphi_2)$, so there exists $\lambda\in X$ such that $f_\lambda(\varphi_1)=\pi_\lambda(\Phi(\varphi_1))\neq\pi_\lambda(\Phi(\varphi_2))=f_\lambda(\varphi_2)$. 
 So, $\tilde{S}$ separates points in $\Delta(A)$. Set $\tilde{S}_{sa}=\{(g+g^{*}), \mathbf{i}(g-g^{*}): g\in \tilde{S}\}\subset C(\Delta(A))_{sa}$ (see also \cref{countable-generating}).  By the Stone-Weierstrass Theorem $\tilde{S}_{sa}$ generates $C(\Delta(A))$ as a C$^*$-algebra. Applying the inverse of the Gelfand transform (which is a $^*$-isomorphism) we may infer that there exist $S\subset A_{sa}$ where $S\in \Gn_{C^*}(A)$ with $|S|\leq \kappa,$ since $|X|=\kappa.$ In case $|S|<\kappa $ we claim that there exists $T \in \Gn_{C^*}(A)$ such that $S\subseteq T$ and $|T|=\kappa.$ Indeed, 
$\kappa\leq|C(\Delta(A))|=|C(\Delta(A))_{sa}|.$ So, there exists a $S'\subseteq C(\Delta(A))_{sa}$ with $|S'|=\kappa$ and $T:=S\cup S_1$ ($S_1$=the inverse image of $S'$ by the Gelfand transform) satisfies the desired properties. 
 
 On the other hand, take an infinite set $S$ with $|S|=\kappa.$ We note that $|S|=|S\times \{0,1\}|=|S|$. Thus, there exists a bijective function $f:S\to S\times \{0,1\}.$ If we set $S_i=f^{-1}(S\times\{i\})$ then $S=S_{0}\dot\cup S_{1}.$ Thus we can apply \cite[Proposition 2.3.7]{Engelking} with $X_s=\mathbb{R}$ for all $s\in S,$  $T=\{0,1\}$ and $S_t,t\in \{0,1\}$ the partition of $S$ constructed above. More precisely the following homeomorphism holds
 (here the topology to be considered is the classical product topology):
 $$\mathbb{R}^\kappa=\Pi_{s\in S}\mathbb{R}\cong\Pi_{t\in \{0,1\}}(\Pi_{s\in S_t} \mathbb{R})=\mathbb{R}^\kappa\times \mathbb{R}^\kappa .$$ A straightforward computation allows us to show that $\CC^\kappa\cong \mathbb{R}^\kappa\ \times \mathbb{R}^\kappa$ (homeomorphically).

 Thus, we have seen that if  $A$ is a unital commutative C$^*$-algebra that is not finitely generated as a C$^*$-algebra, then the set $\GC_{C^*}(A)$ can be described in the following way: 
  \begin{equation*}
  \GC_{C^*}(A)=\{ \kappa\leq |C(\Delta(A))| : \kappa \  \mbox{is a cardinal, and} \ \exists  \Phi: \Delta(A) \to \CC^{\kappa} \ \mbox{embedding}\}.
\end{equation*}
By composing with an homeomorphism from $\CC^\kappa$ onto $\mathbb{R}^\kappa$ we can easily see that $$\GC_{C^*}(A)=\{ \kappa\leq |C(\Delta(A))| : \kappa \  \mbox{is a cardinal, and} \ \exists  \Phi: \Delta(A) \to \mathbb{R}^\kappa \mbox{ embedding}\}.$$ The proof is now complete.
\end{proof}

  Based on \cref{teo genCstar} we introduce the following definition: 

 \begin{defn}\label{definition}
   Let $K$ be a compact Hausdorff space.
   We define the \emph{set of embedding cardinals of} $K$ by
   \begin{equation*}
  \EC(K)= \{\kappa\leq |C(K)|: \kappa\ \mbox{is a cardinal and  exists an embedding }\Phi: K \to \mathbb{R}^{\kappa}\}.
    \end{equation*}
 \end{defn}

For the moment, we recall a couple of definitions concerning some important cardinal invariants of a topological space. We then show that $\gn_{C^*}(A)$ can be computed in terms of these invariants (\cref{prop num gen C*}).

\begin{defn}
  Let $X=(X,\tau)$ be a topological space. The \emph{density character} of $X$, denoted by $d(X)$, is the least cardinality of a dense subset of $X$. The \emph{weight} of $X$, denoted by $w(X)$, is the least cardinality of an open base for $(X,\tau)$.
\end{defn}

 \begin{prop}\label{prop num gen C*}
     Let $A$ be an abelian C$^*$-algebra that is not finitely generated as a C$^*$-algebra. Then 
     $$ w(\Delta(A))=d(A)=\gn_{C*}(A)=\min(\EC( L_A)),$$ where $L_A=\Delta(A)$ if $A$ is unital and $L_A=\omega\Delta(A)$ otherwise\footnote{the one-point compactification of $\Delta(A)$}.
 \end{prop}
 \begin{proof}
First we assume that $A$ is unital; equivalently, $A\cong C(K)$, where $K=\Delta(A)$ is a compact Hausdorff space. The equality $\gn_{C*}(A)=\min(\EC( L_A))$ follows from \cref{teo genCstar}.

   By \cite[Lemma 2.4]{HolNov}   we have $d(C(K))=w(K).$  Take $S\in \Gn_{C^*}(A).$  Since $S$ is infinite $|S^{<w}|=|S|$. Now, we set
   \begin{equation*}
     H(S)=\{\sum_{j=1}^n (\alpha_j +\mathbf{i}\beta_j)x_j: n\in \NN,\alpha_j,\beta_j\in \mathbb{Q},x_j\in S^{<w}\}.
     \end{equation*}
     Then $|H(S)|=|S|$ and $H(S)$ is dense in $A.$ Thus $d(A)\leq \gn_{C*}(A).$ By the proof of \cite[Lemma 2.4]{HolNov} there exists $S_1\subseteq A$ such that $|S_1|=w(K)$, where $S_1$ generates $A_{sa}$ as a (real) Banach algebra ($A_{sa}$= the set of all self-adjoint elements of $A$). Therefore, it is clear, since $A=A_{sa}+iA_{sa},$ that $S_1$ generates $A$ as a C$^*$-algebra and hence $\gn_{C*}(A)\leq |S_1|=w(K)=d(A).$ This finishes the unital case.

Assume now that $A$ is a non-unital C$^*$-algebra.
Then $A\cong C_0(L)$, where $L=\Delta(A)$ is locally compact Hausdorff. It is well known that $C_0(L)$ is $^*$-isomorphic to $\{f\in C(\omega L): f(0)=0\}$, where $\omega L=L\cup\{0\}$ is the one-point compactification of $L$. Moreover,
$$
C(\omega L)\cong C_0(L)\oplus \CC 1 \cong A\oplus \CC 1,
$$
where $A\oplus \CC 1$ is the unitization of $A$. By \cite[Lemma 1]{Nagisa} we have $\gn_{C^*}(A\oplus \CC 1)=\gn_{C^*}(A)$, and therefore \cref{teo genCstar} yields
$$
\min(\EC(\omega L))=\gn_{C^*}(C(\omega L))=\gn_{C^*}(A\oplus \CC 1)=\gn_{C^*}(A).
$$
Since $A\neq 0$, we also have $\aleph_0\le d(A)=d(C_0(L))=d(C(\omega L))=d(A\oplus \CC 1)$. Moreover, \cite[Theorem 3.5.11]{Engelking} shows that $w(L)=w(\omega L)$ for every locally compact Hausdorff space $L$. The conclusion now follows from the unital case.

\end{proof}

Now let $\kappa$ be a cardinal and set $K_\kappa=[0,1]^\kappa$. Then $K_\kappa$ is compact, Hausdorff, and perfect (i.e., it has no isolated points). Moreover, $K_\kappa$ is separable for every $\kappa\leq \mathfrak{c}$ by \cite[Corollary 2.3.16]{Engelking}. We can now prove the following result.

\begin{cor}\label{cor genC C(kappa)}
   $\gn_{C^*}(C(K_\kappa))= \kappa$ for every cardinal $\kappa.$  
\end{cor}
\begin{proof}
If  $\kappa$ is infinite, then $w([0,1]^\kappa)=\kappa$ 
(see the comment after the proof of  \cite[Theorem 2.3.23]{Engelking}) 
and the conclusion follows from  \cref{prop num gen C*}. 

If $\kappa=n<\infty$, then clearly $\gn_{C^*}(C(K_n))\le n$ by \cref{prop Nagisa gener}. Suppose, towards a contradiction, that there exists an embedding $\Phi:K_n \to \mathbb{R}^m$ for some $m<n$. 
Let $U\subseteq K_n$ be a nonempty open set; then the restriction $\Phi_{|U}$ is an embedding of $U$ into $\mathbb{R}^m$, contradicting 
the topological invariance of dimension's Theorem. Thus $\gn_{C^*}(C(K_n) )\geq n$ again by \cref{prop Nagisa gener}.
\end{proof}

\subsection{Algebrability vs genalgebrability of \texorpdfstring{$(\ell_{\infty}\setminus c_0)\cup\{0\}$}{(l-infty\textbackslash{}c0)\textbackslash{}{0}}}

\begin{defn}
    Let $A$ be a Banach algebra and let $S\in \Gn(A)$ (respectively, $S_1\in \Gn_{\mathrm{ban}}(A)$). We say that $S$ (respectively, $S_1$) is a \emph{minimal system of generators} (or \emph{minimal generating set}) for $A$ if no proper subset of it generates $A$ as an algebra (respectively, as a Banach algebra). Similarly, if $A$ is a C$^{*}$-algebra, we call $S_2\in \Gn_{C^*}(A)$ \emph{minimal} if no proper subset of $S_2$ generates $A$ as a C$^*$-algebra. 
\end{defn}

The definition below can be found in \cite[Definition V.3]{A.B.P.S}.

\begin{defn}\label{def classical algebrable}
   Let $A$ be a Banach algebra and $\alpha,\beta$ be  cardinal numbers. We say that a set $M\subset A$ is $(\alpha,\beta)$-\emph{algebrable} (respectively, algebrable) if there exists a subalgebra $B$ of $A$ such that $B\subseteq M \cup \{0\},$ $\dim(B)=\alpha,$ $|S|=\beta$ where $S$ is a minimal  system  of generators of $B$ \textit{as an algebra} (respectively, the cardinality of any system of generators of $B$ is infinite). 
 \end{defn}

 Regarding the specific set $$(\ell_{\infty}\setminus c_0)\cup\{0\},$$ its spaceability was proved by Rosenthal in the 1960s \cite{H.P.R.1,H.P.R.2}, as a consequence of his result that $c_0$ is quasi-complemented in $\ell_{\infty}$ \cite{H.P.R.1}. Concerning algebrability, Garc\'ia-Pacheco, Mart\'in and Seoane-Sep\'ulveda \cite{G.M.S} showed (among other results) that $(\ell_{\infty}\setminus c_0)\cup\{0\}$ contains subalgebras with an infinite minimal set of generators; more recently Papathanasiou \cite{D.P} strengthened the picture by proving strong $\mathfrak{c}$-algebrability.

The techniques developed in this section allow us to control the number of generators of an abelian C$^*$-algebra. For this reason, we consider the following variant of the classical definitions of algebrability.

\begin{defn}\label{def genalgebrable}
   Let $A$ be a Banach algebra and $d,n$  cardinal numbers. We say that a set $M\subset A$ is $\kappa$-\emph{genalgebrable} (respectively, $(d,\kappa)$-genalgebrable) if there exists a subalgebra $B$ of $A$ such that $B\subseteq M \cup \{0\}$ and $\gn(B)=\kappa$ (respectively, $\dim(B)=d$ and $\gn(B)=\kappa$). If  in addition $n=\dim(A),$ then we say that $M$ is maximal genalgebrable.
 \end{defn}

\begin{defn}\label{defn.alg.}
  Let $A$ be a C$^*$-algebra and $n$ a cardinal number. We say that a set $M\subset A$ is $\kappa$-C$^*$-\emph{genalgebrable} if there exists a C$^*$-subalgebra $B$ of $A$ such that $B\subseteq M\cup\{0\}$ and $\gn_{C^*}(B)=\kappa$. If, in addition, $\kappa=\dim A$, then we say that $M$ is maximal-C$^*$-genalgebrable. Similarly, $(d,\kappa)$-C$^*$-genalgebrability can be defined (see \cref{def genalgebrable}).
  \end{defn}
\begin{rem}  
It is worth noting that $\gn(A)$ is always well-defined, and \cref{def genalgebrable} (respectively \cref{defn.alg.}) does not rely on the existence of a minimal generating set. 
\end{rem}

\cref{cor genC C(kappa)} together with \cref{trm l_inf in l_inf--c0} already implies that $(\ell_{\infty}\setminus c_0)\cup \{0\}$ is $\kappa$-C$^*$-\emph{genalgebrable} for every $\kappa\leq \mathfrak{c}$. In the next result, we also keep track of the dimensions of the algebras in order to identify all pairs $(d,\kappa)$ for which $(\ell_{\infty}\setminus c_0)\cup \{0\}$ is $(d,\kappa)$-C$^*$-genalgebrable.
In doing so, we obtain a complete classification of the pairs $(d,\kappa)$ for which $(d,\kappa)$-(C$^*$)-genalgebrability holds.

\begin{thrm} \label{teo n-alg and n-ctar-alg}
Let $M=(\ell_{\infty}\setminus c_0)\cup \{0\}$. Then:
\begin{enumerate}
    \item \label{teo n-alg and n-ctar-alg i} $M$ is $(d,1)$-genalgebrable and $(d,1)$-C$^*$-genalgebrable for every $d\in \mathbb{N}$.
    \item \label{teo n-alg and n-ctar-alg ii} If $d,n<\infty$ and $n\neq 1$, then $M$ is neither $(d,n)$-genalgebrable nor $(d,n)$-C$^*$-genalgebrable.
    \item \label{teo n-alg and n-ctar-alg iii} $M$ is $(\mathfrak{c},\kappa)$-C$^*$-genalgebrable for every $\kappa\leq \mathfrak{c}$. 
    \item \label{teo n-alg and n-ctar-alg iv} $M$ is $(\aleph_0,\kappa)$-genalgebrable for every $\kappa\leq \aleph_0$.
    \item \label{teo n-alg and n-ctar-alg v} $M$ is $(\kappa,\kappa)$-genalgebrable for every $\aleph_0 < \kappa \leq \mathfrak{c}$. 
    \end{enumerate}
\end{thrm}

\begin{proof}
Set $M=   
 (\ell_{\infty}\setminus c_0)\cup \{0\}.$  By \cref{trm l_inf in l_inf--c0}, we first build copies of the algebras with the desired properties in $\ell_\infty$ and then transfer them to $M.$ Note that the dimension, $\gn$, and $\gn_{C^*}$ are preserved under embeddings. We also consider non-closed algebras; these are subalgebras of some $\ell_\infty$-embeddable $C(K)$ space, so the results transfer as well. Throughout, we take $C(K)$-spaces with $K$ separable, so $\ell_\infty$-embeddability follows from \cref{lem linf-embedable}.  
Throughout the proof, $(p_n)\subset \ell_\infty$ denotes a sequence of mutually orthogonal projections.
 
$\cref{teo n-alg and n-ctar-alg i}$ 
For any fixed $d\in \mathbb{N}$ let us consider the algebra $A_d=\Span\{p_i:i\in 1\leq i\leq d\}.$ 
The equality $\gn(A_d)=\gn_{C^*}(A_d)=1$ is well known, and it also follows directly from our results. Indeed, $A_d\cong \CC^d\cong C(\{1,2,\ldots,d\})$. Since $\{1,\dots,d\}$ embeds into $\mathbb{R}$, \cref{prop Nagisa gener} yields $\gn_{C^*}(A_d)=1$. As $d<\infty$, we have $A_d(S)=\overline{A_d(S)}$ for every $S\subseteq A_d$, and hence $\gn(A_d)=\gn_{C^*}(A_d)=1$. Therefore, for every $d\in \mathbb{N}$, the set $M$ is $(d,1)$-genalgebrable and $(d,1)$-C$^*$-genalgebrable.

 $\cref{teo n-alg and n-ctar-alg ii}$
 Observe that every closed subalgebra $C$ of an abelian C$^*$-algebra $A$ is semisimple; in particular, every closed subalgebra of $\ell_{\infty}$ is semisimple. Hence, if $C$ is a finite-dimensional subalgebra of $\ell_{\infty}$, then $C$ is a commutative semisimple Banach algebra. By the Artin--Wedderburn theorem, $C\cong \CC^{\dim(C)}$, and therefore $\gn(C)=\gn_{C^*}(C)=1$ by \cref{teo n-alg and n-ctar-alg i}.
 
\cref{teo n-alg and n-ctar-alg iii}.
For every $\kappa\leq \mathfrak{c}$ we have the equalities $\dim(C(K_\kappa))=2^{\aleph_0}=\mathfrak{c}$ (because $K_\kappa$ is separable)  and $\gn_{C^*}(C(K_\kappa))=\kappa$ (see \cref{cor genC C(kappa)} and the preceding comments).
  Thus $M$ is $(\mathfrak{c},\kappa)$-C$^*$-genalgebrable for $\kappa \leq \mathfrak{c}$. So, we have \cref{teo n-alg and n-ctar-alg iii}. 

  \cref{teo n-alg and n-ctar-alg iv} and \cref{teo n-alg and n-ctar-alg v}. 
Now fix $\kappa\leq \mathfrak{c}$. Choose $S_\kappa\in \Gn_{C^*}(C(K_\kappa))$ with $|S_\kappa|=\kappa$ and set $A_\kappa:=A(S_\kappa)$ ($A_\kappa$ is a self-adjoint subalgebra). We claim that $\gn(A_\kappa)=\kappa$.  Indeed, $\gn(A_\kappa)\leq \kappa$ because $S_\kappa$ generates $A_\kappa$ and $|S_\kappa|=\kappa$. Conversely, if $\gn(A_\kappa)=\kappa'<\kappa$, then there exists $T\in \Gn(A_\kappa)$ with $|T|=\kappa'$, and hence $A_\kappa=A(T)$. Taking closures in $C(K_\kappa)$ yields $C(K_\kappa)=\overline{A_\kappa}=\overline{A(T)}$, so $T\in \Gn_{C^*}(C(K_\kappa))$, contradicting $|T|<\kappa=\gn_{C^*}(C(K_\kappa))$. Therefore, $\gn(A_\kappa)=\kappa$.
 
It remains to compute $\dim(A_{\kappa})$. First, note that $\dim(A_{\kappa})\geq \aleph_0$, since otherwise $A_{\kappa}$ would be finite-dimensional and therefore so would its closure $\overline{A_{\kappa}}=C(K_\kappa)$.
 
 By \cref{can not be finitely generated} we have 
 $$\aleph_0\leq \dim(A_\kappa) \leq \gn(A_\kappa) \cdot \aleph_0.$$
Thus if $\kappa\leq \aleph_0$ we have $\dim(A_\kappa)=\aleph_0$ and $\gn(A_\kappa)=\kappa$ (proving \cref{teo n-alg and n-ctar-alg iv}). 

Finally, if $\kappa>\aleph_0$ then $\kappa=\gn(A_\kappa)\cdot \aleph_0 \leq \aleph_0 \dim(A_\kappa)$ by \cref{can not be finitely generated} and hence $\dim(A_\kappa)=\kappa.$ 
So, $\gn(A_\kappa)=\dim(A_\kappa)=\kappa$ yielding \cref{teo n-alg and n-ctar-alg v}.
\end{proof} 

In what follows, let $\alpha$ and $\beta$ be cardinals. \cref{teo n-alg and n-ctar-alg} covers all ``admissible'' pairs $(\alpha,\beta)$ for $(\alpha,\beta)$-(C$^*$)-genalgebrability of $M=(\ell_{\infty}\setminus c_0)\cup\{0\}$. More precisely, any pair that does not appear in its statement is genuinely impossible: there is no algebra (respectively, no C$^*$-algebra) realizing the corresponding values of $(\alpha,\beta)$, not merely no such subalgebra of $M$ (see also \cref{impossibility}).

The phenomenon witnessed in \cref{teo n-alg and n-ctar-alg} may also seem counterintuitive: the only obstruction to $(\alpha,\beta)$-(C$^*$)-genalgebrability occurs when both $\alpha$ and $\beta$ are finite and $\beta\neq 1$, which yields a kind of discontinuity in the behavior of $(\alpha,\beta)$-(C$^*$)-genalgebrability. This discontinuity disappears when one studies algebrability in the classical sense, precisely because one can find minimal generating sets with different cardinalities. Therefore, the notions of C$^*$-genalgebrability and genalgebrability may be viewed as refinements of algebrability, providing a hierarchy between mere nonemptiness and the existence of richer algebraic structure.

\begin{rem}\label{impossibility}
Observe that, by \cref{can not be finitely generated}, there is no algebra $A$ with $\dim(A)=\mathfrak{c}$ admitting a countable generating set $S\in\Gn(A)$ (that is, $|S|=\aleph_0$). On the other hand, there is no C$^*$-algebra $A$ with $\dim(A)=\aleph_0$ either. 

It is straightforward to check that every other pair $(\alpha,\beta)$ not covered by \cref{teo n-alg and n-ctar-alg} is indeed ``impossible'' (in the sense explained above).
\end{rem}

We introduce the following definition, which appears to be missing from the literature:
\begin{defn}\label{missing}
    Let $A$ be a C$^*$-algebra and $d,n$   cardinal numbers.  We  say that a subset $M$ of  $A$ is \textit{$(d,\kappa)$-C$^*$-algebrable} if there exists a C$^*$-subalgebra $B$ of $A$ such that $B\subseteq M\cup \{0\},$ $d=\dim(B)$ and there exists $S\in \Gn_{C^*}(B)$ such that $|S|=\kappa$ and $S$ is a minimal generating set (as a C$^*$-algebra). 
\end{defn}

\begin{rem}\label{rem min gen set if gec*<inf?}
If $A$ is a C$^*$-algebra with $\gn_{C^*}(A)<\infty$, then there exists a minimal generating set $S$ with $|S|=\gn_{C^*}(A)$. However, minimal generating sets need not all have the same cardinality. For instance $c_0$ is a singly generated C$^*$-algebra  but also has a minimal generating set formed by a sequence of mutually orthogonal projections. On the other hand, when $\gn_{C^*}(A)=\infty$, we do not even know whether a minimal generating set exists. This means that obtaining $(d,\kappa)$-C$^*$-algebrability relies on constructing particular examples that have a minimal generating set (when possible). Although we are able to find such constructions for all possible cardinalities that appear in \cref{teo n-alg and n-ctar-alg}, it is important to notice that for some well-known algebras, such as $\ell_\infty,$ it seems to be unknown whether a minimal generating set (as an algebra or as a C$^*$-algebra) exists at all. Thus, in general, genalgebrability results cannnot be transferred \textit{automatically} to algebrability results.
\end{rem}

\begin{lem}\label{lem kappa minimal generator}
For every cardinal $\kappa$, the C$^*$-algebra $B_\kappa:=C(K_\kappa)$ admits a minimal generating set $S_\kappa\in \Gn_{C^*}(B_\kappa)$ with $|S_\kappa|=\kappa$. Moreover, $S_\kappa$ is a minimal generating set for $A(S_\kappa)$.
\end{lem}
\begin{proof}
For $\kappa<\infty$, the existence of $S_\kappa$ follows directly from the definition of $\gn_{C^*}$: in the finite case, removing an element from a generating set strictly decreases its cardinality.

Recall that, by our convention, $S_\kappa\subseteq (B_\kappa)_{sa}$. Consequently, $A_\kappa:=A(S_\kappa)$ is a self-adjoint subalgebra of $B_\kappa$, so when working in $A_\kappa$ we restrict attention to generating sets consisting of self-adjoint elements (even for infinite cardinals).
Now, fix $f\in S_\kappa$. If $A_\kappa=A(S_\kappa\setminus\{f\})$, then $C(K_\kappa)=\overline{A_\kappa}=\overline{A(S_\kappa\setminus\{f\})}$, so $S_\kappa\setminus\{f\}\in\Gn_{C^*}(C(K_\kappa))$ and $|S_\kappa\setminus\{f\}|<\kappa=\gn_{C^*}(C(K_\kappa))$, a contradiction. Therefore, $S_\kappa$ is a minimal generating set for $A_\kappa$.

Next, fix $\kappa$ infinite. Let $X$ be a set where $|X|=\kappa.$
For each $i\in X,$
let $\pi_i:K_\kappa \to [0,1]$ be the canonical projection. First observe that for each $i$ we have $\pi_i\in C(K_\kappa)_{sa}$ since $\pi_i(K_\kappa)=[0,1].$ It follows that the set $S_\kappa=\{ \pi_i:i\in X\}\cup \{1\}$ is selfadjoint. It is clear that  $S_\kappa$ separates points and vanishes nowhere. By the Stone-Weierstrass theorem $S_\kappa\in \gn_{C^*}(C(K_\kappa)).$ In addition, $|S_{\kappa}|=\kappa.$

We now show that $S_\kappa$ is in fact a minimal generating set. Fix $i\in X$ and set $S_{\kappa,i}:=S_\kappa\setminus\{\pi_i\}$.
Take $x,y\in[0,1]^\kappa$ such that $x_i\neq y_i$ and $x_j=y_j$ for all $j\neq i$. Then $x\neq y$ but $f(x)=f(y)$ for every $f\in S_{\kappa,i}$, and hence $\overline{A(S_{\kappa,i})}\neq C(K_\kappa)$ since $C(K_{\kappa})$ separates points and $g(x)=g(y)$ for all $g\in \overline{A(S_{\kappa,i})}.$
Moreover, $S_0:=S_\kappa\setminus\{1\}$ is not a generating set either, since every function in $A(S_0)$ vanishes at $(0,\dots,0)\in[0,1]^\kappa$. Therefore $S_\kappa\in\Gn_{C^*}(C(K_\kappa))$ is minimal, and $|S_\kappa|=\kappa$.

Finally, note that $A_{\kappa,i}:=A(S_{\kappa,i})\subseteq A_\kappa:=A(S_\kappa)$. Since $A_\kappa$ separates points whereas $A_{\kappa,i}$ does not, we have $A_{\kappa,i}\neq A_\kappa$. Likewise, $S_0$ cannot generate $A_\kappa$ because $1\notin A(S_0)$. This shows that $S_\kappa$ is a minimal generating set for $A_\kappa$.
\end{proof}

Our next goal is to determine all possible $(\alpha,\beta)$-(C$^{*}$)-algebrability phenomena for $(\ell_{\infty}\setminus c_0)\cup\{0\}$. Although transferring algebrability results to the (C$^{*}$)-algebrability-setting is not automatic (see \cref{rem min gen set if gec*<inf?}), we were able to exhibit minimal generating sets for all algebras used in \cref{teo n-alg and n-ctar-alg}.

\begin{cor} \label{2-teo n-alg and n-ctar-alg}
Let $M=(\ell_{\infty}\setminus c_0)\cup \{0\}$. Then:
\begin{enumerate}
    \item \label{2teo n-alg and n-ctar-alg i} $M$ is $(d,n)$-algebrable and $(d,m)$-C$^*$-algebrable for all $d\in \mathbb{N}$ with $d\geq m,n.$ 
    \item \label{2teo n-alg and n-ctar-alg ii} $M$ is $(\mathfrak{c},\kappa)$-C$^*$-algebrable for all $\kappa\leq \mathfrak{c}$.
    \item \label{2teo n-alg and n-ctar-alg iii} $M$  is  $(\aleph_0,\kappa)$-algebrable for $\kappa \leq \aleph_0$.
    \item \label{2teo n-alg and n-ctar-alg iv} $M$  is  $(\kappa,\kappa)$-algebrable for $\aleph_0< \kappa \leq \mathfrak{c}$.
    \end{enumerate}
    \end{cor}
\begin{proof}
Throughout the proof, the objects $A_d$, $A_\kappa$, $C(K_\kappa)$, and $(p_n)$ have the same meaning as in the proof of \cref{teo n-alg and n-ctar-alg}. We follow the same strategy: we work first in $\ell_\infty$ and then transfer the conclusions to $M$.

\cref{2teo n-alg and n-ctar-alg i} 
Assume that, for each $d\in \NN$, the algebra $\CC^d$ admits a minimal set of generators as an algebra (equivalently, in this case, as a C$^*$-algebra) of cardinality $n$ for every $n\in \{1,\dots,d\}$. Then the same conclusion holds for $A_d$, exactly as in the proof of \cref{teo n-alg and n-ctar-alg}. The case $n=1$ is treated there.

Now fix $k\in \{2,\dots,d\}$ and let $p_1,\dots,p_d\in \CC^d=C(\{1,2,\ldots,d\})$ be mutually orthogonal projections. Write $\CC^d=A\oplus B$, where $A=\Span(\{p_1,\dots,p_k\})$ and $B=\Span(\{p_{k+1},\dots,p_d\}).$ Let $g$ be a generator for $A$. Then $\{g,p_{k+1},\dots,p_d\}$ is a minimal set of generators (since it consists of mutually orthogonal elements) of cardinality $(d-k)+1$. Note that $\{d-k+1:2\leq k\leq d\}=\{1,\dots,d-1\}$. Thus, for each $d\in \NN$ we can find a subalgebra of $\ell_{\infty}$ with a minimal set of generators (also as a C$^*$-algebra) of cardinality $n$ for all $n<d$. The case $n=d$ is clear since $A_d$ is spanned by $\{p_1,\dots,p_d\}$. Hence, $M$ is $(d,n)$-(C$^{*}$)-algebrable for all $d,n\in\mathbb{N}$ with $n\leq d$.

 \cref{2teo n-alg and n-ctar-alg ii} Follows from \cref{teo n-alg and n-ctar-alg}\cref{teo n-alg and n-ctar-alg iii} (and its proof),  \cref{lem kappa minimal generator} and the fact that  $\dim(C(K_\kappa))=\mathfrak{c}$ for all $\kappa\leq \mathfrak{c}.$

\cref{2teo n-alg and n-ctar-alg iii} and \cref{2teo n-alg and n-ctar-alg iv} follow from, respectively, \cref{teo n-alg and n-ctar-alg}\cref{teo n-alg and n-ctar-alg iv} and \cref{teo n-alg and n-ctar-alg v} (and their proofs) together with \cref{lem kappa minimal generator}.  
\end{proof}
Finally, by applying \cref{thrm l-c embedd} or \cref{cor abelian triple in B(H)}, every affirmative case in \cref{teo n-alg and n-ctar-alg} and \cref{2-teo n-alg and n-ctar-alg} transfers to $(\ell_\infty\setminus c)\cup\{0\}$ and $(B(\ell_2)\setminus \mathcal{K}(\ell_2))\cup\{0\}$.
Moreover, using \cref{thrm l-c embedd} together with \cref{cor B-(K+c) emebd}, the same conclusions hold for $(\ell_\infty\setminus c)\cup\{0\}$ and $(B(\ell_2)\setminus \widetilde{\mathcal{K}}(\ell_2))\cup\{0\}$.

\begin{cor}\label{cor c*algebrable}
The sets
\[
(B(\ell_2)\setminus \mathcal{K}(\ell_2))\cup\{0\},\qquad (\ell_{\infty}\setminus c)\cup\{0\},\qquad (B(\ell_2)\setminus \widetilde{\mathcal{K}}(\ell_2))\cup\{0\}
\]
are $(d,n)$-genalgebrable and $(d,n)$-C$^*$-genalgebrable (respectively, $(d,n)$-algebrable and $(d,n)$-C$^*$-algebrable) for every $(d,n)$ for which \cref{teo n-alg and n-ctar-alg} (respectively, \cref{2-teo n-alg and n-ctar-alg}) gives an affirmative answer.
 \end{cor} 

\textbf{Acknowledgments}
 Jorge J. Garcés was partially supported by grant PID2021-122126NB-C31 funded by MCIN/AEI/10.13039/501100011033 and by ERDF/EU and Junta de Andalucía grant FQM375. W. Franca was partially supported by FAPEMIG grant APQ-02575-24, and IMU grant 1057. Both authors were partially supported by the IMAG--Mar\'ia de Maeztu grant CEX2020-001105-M/AEI/\allowbreak 10.13039/\allowbreak 501100011033.\smallskip

 Part of this work was completed during several research visits: first to the Polytechnic university of Madrid (November 2023), and second to the Federal University of Juiz de Fora (August 2025). A significant portion of the work was also conducted during a joint visit by both authors to IMAG at the University of Granada (June 2026), and a subsequent visit by the second author to the SRMC at Peking University. Both authors thank these institutions for their hospitality and support. We thank the anonymous reviewer for their valuable comments and suggestions, which have significantly improved this manuscript.\medskip

\bibliographystyle{plain}
\bibliography{bibl}{}

@Article{AlexanderCompact,
 Author = {Alexander, J.C.},
 Title = "{Compact Banach Algebras}",
 FJournal = {Proc. London Math. Soc.},
 Journal = {Proc. London Math. Soc.},
Volume = {3},
Number = {18},
 Pages = {1-18},
 Year = {1968},
}

@book{BD,
 author = {Bonsall, F. F. and Duncan, J.},
 title = {Complete normed algebras},
 fseries = {Ergebnisse der Mathematik und ihrer Grenzgebiete},
 series = {Ergeb. Math. Grenzgeb.},
 volume = {80},
 year = {1973},
 publisher = {Springer-Verlag, Berlin},
 language = {English},
 keywords = {46Hxx,46Jxx,46Kxx,46L05,46M05,46M99,16E40},
 zbMATH = {3426280},
 Zbl = {0271.46039}
}

@article{Busby68,
 author = {Busby, R. C.},
 title = {Double centralizers and extensions of {{\({C}^ *\)}}-algebras},
 fjournal = {Transactions of the American Mathematical Society},
 journal = {Trans. Am. Math. Soc.},
 issn = {0002-9947},
 volume = {132},
 pages = {79--99},
 year = {1968},
 language = {English},
 doi = {10.2307/1994883},
 zbMATH = {3264409},
 Zbl = {0165.15501}
}

@article{C.Y.Chou,
 author = {Chou, Chun-Yen},
 title = {Notes on the separability of {{\(C^*\)}}-algebras},
 fjournal = {Taiwanese Journal of Mathematics},
 journal = {Taiwanese J. Math.},
 issn = {1027-5487},
 volume = {16},
 number = {2},
 pages = {555--559},
 year = {2012},
 language = {English},
 doi = {10.11650/twjm/1500406602},
 keywords = {46L05,54H10},
 zbMATH = {6042555},
 Zbl = {1252.46046}
}

@book{Engelking,
 author = {Engelking, Ryszard},
 title = {General topology. {A} revised and enlarged translation. {Translated} by the author},
 fseries = {Monografie Matematyczne},
 series = {Monogr. Mat., Warszawa},
 volume = {60},
 year = {1977},
 publisher = {PWN - Panstwowe Wydawnictwo Naukowe, Warszawa},
 language = {English},
 keywords = {54-01},
 zbMATH = {3581219},
 Zbl = {0373.54002}
}

@book{A.B.P.S,
 author = {Aron, Richard M. and Bernal-Gonz{\'a}lez, Luis and Pellegrino, Daniel M. and Seoane Sep{\'u}lveda, Juan B.},
 title = {Lineability. {The} search for linearity in mathematics},
 fseries = {Monographs and Research Notes in Mathematics},
 series = {Monogr. Res. Notes Math.},
 isbn = {978-1-4822-9909-0; 978-1-138-89443-3; 978-1-4822-9910-6},
 year = {2016},
 publisher = {Boca Raton, FL: CRC Press},
 language = {English},
 doi = {10.1201/b19277},
 keywords = {46-02,15-02,30-02,28-02},
 zbMATH = {6473589},
 Zbl = {1348.46001}
}

@article{Gurariy66,
  author  = {Gurariy, V. I.},
  title   = {Subspaces and bases in spaces of continuous functions},
  note    = {(Russian)},
  journal = {Dokl. Akad. Nauk SSSR},
  volume  = {167},
  year    = {1966},
  pages   = {971--973}
}

@article{AGS05,
  author  = {Aron, Richard M. and Gurariy, V. I. and Seoane-Sep{\'u}lveda, Juan B.},
  title   = {Lineability and spaceability of sets of functions on {$\mathbb{R}$}},
  journal = {Proc. Amer. Math. Soc.},
  volume  = {133},
  number  = {3},
  year    = {2005},
  pages   = {795--803}
}

@article{EGS14,
  author  = {Enflo, Per and Gurariy, V. I. and Seoane-Sep{\'u}lveda, Juan B.},
  title   = {Some results and open questions on spaceability in function spaces},
  journal = {Trans. Amer. Math. Soc.},
  volume  = {366},
  number  = {2},
  year    = {2014},
  pages   = {611--625}
}

@article{APS06,
  author  = {Aron, Richard M. and P{\'e}rez-Garc{\'i}a, Daniel and Seoane-Sep{\'u}lveda, Juan B.},
  title   = {Algebrability of the set of non-convergent {Fourier} series},
  journal = {Studia Math.},
  volume  = {175},
  number  = {1},
  year    = {2006},
  pages   = {83--90}
}

@article{BQ07,
  author  = {Bayart, Fr{\'e}d{\'e}ric and Quarta, Laurent},
  title   = {Algebras in sets of queer functions},
  journal = {Israel J. Math.},
  volume  = {158},
  year    = {2007},
  pages   = {285--296}
}

@article{BG13,
  author  = {Bartoszewicz, A. and G{\l}ab, S.},
  title   = {Strong algebrability of sets of sequences and functions},
  journal = {Proc. Amer. Math. Soc.},
  volume  = {141},
  year    = {2013},
  pages   = {827--835},
  doi     = {10.1090/S0002-9939-2013-11641-2}
}

@article{BPS14,
  author  = {Bernal-Gonz{\'a}lez, Luis and Pellegrino, Daniel M. and Seoane-Sep{\'u}lveda, Juan B.},
  title   = {Linear subsets of nonlinear sets in topological vector spaces},
  journal = {Bull. Amer. Math. Soc.},
  volume  = {51},
  year    = {2014},
  pages   = {71--130},
  doi     = {10.1090/S0273-0979-2013-01421-6}
}

@article{Fonf1999,
 author = {Fonf, V. P. and Gurariy, V. I. and Kadets, M. I.},
 title = {An infinite-dimensional subspace of {$C[0,1]$} consisting of nowhere differentiable functions},
 journal = {C. R. Acad. Bulgare Sci.},
 volume = {52},
 number = {11--12},
 year = {1999},
 pages = {13--16},
}

@article{Aizpuru2006,
  author  = {Aizpuru, A. and P{\'e}rez-Eslava, C. and Seoane-Sep{\'u}lveda, J.},
  title   = {Linear structure of sets of divergent sequences and series},
  journal = {Linear Algebra Appl.},
  volume  = {418},
  year    = {2006},
  pages   = {595--598},
  doi     = {10.1016/j.laa.2006.02.041},
}

@article{Araujo2017,
  author  = {Ara{\'u}jo, G. and Bernal-Gonz{\'a}lez, L. and Mu{\~n}oz-Fern{\'a}ndez, G. A. and Prado-Bassas, J. A. and Seoane-Sep{\'u}lveda, J.},
  title   = {Lineability in sequence and function spaces},
  journal = {Studia Math.},
  volume  = {237},
  number  = {2},
  year    = {2017},
  pages   = {119--136},
  doi     = {10.4064/sm8358-10-2016},
}

@article{BOTELHO2011,
  author  = {Botelho, G. and Diniz, D. and F{\'a}varo, V. and Pellegrino, D.},
  title   = {Spaceability in {Banach} and quasi-{Banach} sequence spaces},
  journal = {Linear Algebra Appl.},
  volume  = {434},
  number  = {5},
  year    = {2011},
  pages   = {1255--1260},
  doi     = {10.1016/j.laa.2010.11.012},
}

@article{Cariello_Sepulveda,
  author  = {Cariello, D. and Seoane-Sep{\'u}lveda, J. B.},
  title   = {Basic sequences and spaceability in {$\ell_p$} spaces},
  journal = {J. Funct. Anal.},
  volume  = {266},
  number  = {6},
  year    = {2014},
  pages   = {3797--3814},
  doi     = {10.1016/j.jfa.2013.12.011},
}

@article{FONF201494,
  author  = {Fonf, V. P. and Zanco, C.},
  title   = {Almost overcomplete and almost overtotal sequences in {Banach} spaces},
  journal = {J. Math. Anal. Appl.},
  volume  = {420},
  number  = {1},
  year    = {2014},
  pages   = {94--101},
  doi     = {10.1016/j.jmaa.2014.05.045},
}

@article{accumulation_points,
  author  = {Leonetti, P. and Russo, T. and Somaglia, J.},
  title   = {Dense lineability and spaceability in certain subsets of {$\ell_\infty$}},
  journal = {Bull. London Math. Soc.},
  volume  = {55},
  number  = {5},
  year    = {2023},
  pages   = {2283--2303},
  doi     = {10.1112/blms.12858},
}

@article{Axarlis2021,
  author  = {Axarlis, M. and Deliyanni, I. and Loukidou, T. and Nestoridis, V. and Papanikos, K. and Tziotziou, N.},
  title   = {Topological and algebraic genericity and spaceability for an extended chain of sequence spaces},
  journal = {Monatsh. Math.},
  volume  = {200},
  number  = {3},
  year    = {2023},
  pages   = {495--505},
  doi     = {10.1007/s00605-022-01732-y},
}

@article{Botelho2012,
  author  = {Botelho, G. and F{\'a}varo, V. and Pellegrino, D. and Seoane-Sep{\'u}lveda, J.},
  title   = {{$L_p[0,1] \setminus \bigcup_{q>p} L_q[0,1]$} is spaceable for every {$p>0$}},
  journal = {Linear Algebra Appl.},
  volume  = {436},
  year    = {2012},
  pages   = {2963--2965},
  doi     = {10.1016/j.laa.2011.12.028},
}

@article{Diniz2021,
  author  = {Diniz, D. and Raposo Jr., A.},
  title   = {A note on the geometry of certain classes of linear operators},
  journal = {Bull. Braz. Math. Soc. (N.S.)},
  volume  = {52},
  number  = {4},
  year    = {2021},
  pages   = {1073--1080},
  doi     = {10.1007/s00574-021-00246-9},
}

@article{Aires2025,
  author  = {Aires, M. and Botelho, G.},
  title   = {Spaceability of sets of non-injective maps},
  journal = {Bull. Braz. Math. Soc. (N.S.)},
  volume  = {56},
  number  = {3--4},
  year    = {2025},
  pages   = {14},
  doi     = {10.1007/s00574-025-00458-3},
}

@article{Aviles2024,
  author  = {Avil{\'e}s, A. and Mart{\'\i}nez-Cervantes, G. and Rueda Zoca, A. and Tradacete, P.},
  title   = {Infinite dimensional spaces in the set of strongly norm-attaining {Lipschitz} maps},
  journal = {Rev. Mat. Iberoam.},
  volume  = {40},
  number  = {1},
  year    = {2024},
  pages   = {189--200},
  doi     = {10.4171/rmi/1425},
}

@article{Araujo2024,
  author  = {Ara{\'u}jo, G. and Fenoy, M. and Fern{\'a}ndez-S{\'a}nchez, J. and L{\'o}pez-Salazar, J. and Seoane-Sep{\'u}lveda, J. B. and Vecina, J. M.},
  title   = {Modes of convergence of random variables and algebraic genericity},
  journal = {Rev. R. Acad. Cienc. Exactas F{\'\i}s. Nat. Ser. A Mat.},
  volume  = {118},
  year    = {2024},
  pages   = {63},
  doi     = {10.1007/s13398-024-01561-8},
}

@article{Raposo2023,
  author  = {Raposo Jr., Anselmo and Serrano-Rodr{\'\i}guez, Diana M.},
  title   = {Coefficients of Multilinear Forms on Sequence Spaces},
  journal = {Bull. Braz. Math. Soc. (N.S.)},
  year    = {2023},
  volume  = {54},
  number  = {43},
  pages   = {21},
  issn    = {1678-7714},
  doi     = {10.1007/s00574-023-00359-3},
}

@book{F.H.H.P.Z,
 author = {Fabian, Mari{\'a}n and Habala, Petr and H{\'a}jek, Petr and Montesinos Santaluc{\'{\i}}a, Vicente and Pelant, Jan and Zizler, V{\'a}clav},
 title = {Functional analysis and infinite-dimensional geometry},
 fseries = {CMS Books in Mathematics/Ouvrages de Math{\'e}matiques de la SMC},
 series = {CMS Books Math./Ouvrages Math. SMC},
 issn = {1613-5237},
 volume = {8},
 isbn = {0-387-95219-5},
 year = {2001},
 publisher = {New York, NY: Springer},
 language = {English},
 keywords = {46-01,46B20,46Bxx,47B07},
 zbMATH = {1634474},
 Zbl = {0981.46001}
}

@article{G.M.S,
 author = {Garc{\'{\i}}a-Pacheco, F. J. and Mart{\'{\i}}n, M. and Seoane-Sep{\'u}lveda, J. B.},
 title = {Lineability, spaceability, and algebrability of certain subsets of function spaces},
 fjournal = {Taiwanese Journal of Mathematics},
 journal = {Taiwanese J. Math.},
 issn = {1027-5487},
 volume = {13},
 number = {4},
 pages = {1257--1269},
 year = {2009},
 language = {English},
 doi = {10.11650/twjm/1500405506},
 keywords = {46E25,46J10,26A42,15A03,26A30,54C30},
 zbMATH = {5613866},
 Zbl = {1201.46027}
}

@article{HolNov,
 author = {Hol{\'a}, {\v{L}}ubica and Novotn{\'y}, Branislav},
 title = {Topology of uniform convergence and { \v{C}}ech-{Stone} compactification},
 fjournal = {Journal of Mathematical Analysis and Applications},
 journal = {J. Math. Anal. Appl.},
 issn = {0022-247X},
 volume = {424},
 number = {1},
 pages = {470--474},
 year = {2015},
 language = {English},
 doi = {10.1016/j.jmaa.2014.10.088},
 keywords = {54A25},
 zbMATH = {6381790},
 Zbl = {1306.54005}
}

@article{Johnson,
 author = {Johnson, B. E.},
 title = {The uniqueness of the (complete) norm topology},
 fjournal = {Bulletin of the American Mathematical Society},
 journal = {Bull. Am. Math. Soc.},
 issn = {0002-9904},
 volume = {73},
 pages = {537--539},
 year = {1967},
 language = {English},
 doi = {10.1090/S0002-9904-1967-11735-X},
 zbMATH = {3275027},
 Zbl = {0172.41004}
}

@article{Kamowitz,
 author = {Kamowitz, Herbert},
 title = {On compact multipliers of {Banach} algebras},
 fjournal = {Proceedings of the American Mathematical Society},
 journal = {Proc. Am. Math. Soc.},
 issn = {0002-9939},
 volume = {81},
 pages = {79--80},
 year = {1981},
 language = {English},
 doi = {10.2307/2043990},
 keywords = {47B37,47B06,46K99},
 zbMATH = {3778179},
 Zbl = {0494.47020}
}

@misc{Kaplansky,
 author = {Kaplansky, Irving},
 title = {Set theory and metric spaces. 2nd ed},
 year = {1977},
 language = {English},
 howpublished = {New {York}: {Chelsea} {Publishing} {Company}. {XII}, 140 p.  (1977).},
 keywords = {54-01,03-01,54E35},
 zbMATH = {3617138},
 Zbl = {0397.54002}
}

@article{Kozminder,
 author = {Ghasemi, Saeed and Koszmider, Piotr},
 title = {Noncommutative {Cantor}-{Bendixson} derivatives and scattered {{\(C^*\)}}-algebras},
 fjournal = {Topology and its Applications},
 journal = {Topology Appl.},
 issn = {0166-8641},
 volume = {240},
 pages = {183--209},
 year = {2018},
 language = {English},
 doi = {10.1016/j.topol.2018.03.008},
 keywords = {46L35,46L05},
 zbMATH = {6863911},
 Zbl = {1395.46046}
}

@incollection{Nagisa,
 author = {Nagisa, Masaru},
 title = {Single generation and rank of {{\(C^*\)}}-algebras},
 booktitle = {Operator algebras and applications. Proceedings of the US-Japan seminar held at Kyushu University, Fukuoka, Japan, June 7--11, 1999},
 isbn = {4-931469-23-X},
 pages = {135--143},
 year = {2004},
 publisher = {Tokyo: Mathematical Society of Japan},
 language = {English},
 keywords = {46L05,46L35,46L10},
 zbMATH = {2131695},
 Zbl = {1075.46050}
}

@article{D.P,
 author = {Papathanasiou, Dimitris},
 title = {Dense lineability and algebrability of {{\(\ell^{\infty}{{\setminus}} c_0\)}}},
 fjournal = {Proceedings of the American Mathematical Society},
 journal = {Proc. Am. Math. Soc.},
 issn = {0002-9939},
 volume = {150},
 number = {3},
 pages = {991--996},
 year = {2022},
 language = {English},
 doi = {10.1090/proc/15758},
 keywords = {46B87,46B25},
 zbMATH = {7469037},
 Zbl = {1492.46022}
}

@misc{Rickart,
 author = {Rickart, Charles E.},
 title = {General theory of {Banach} algebras},
 year = {1960},
 language = {English},
 howpublished = {(The {University} {Series} in {Higher} {Mathematics}.) {Princeton}, {N}. {J}.-{Toronto}- {London}-{New} {York}: {D}. {Van} {Nostrand} {Company}, {Inc}. {XI}, 394 p. (1960).},
 zbMATH = {3155072},
 Zbl = {0095.09702}
}

@article{H.P.R.1,
 author = {Rosenthal, H. P.},
 title = {On quasi-complemented subspaces of {Banach} spaces},
 fjournal = {Proceedings of the National Academy of Sciences of the United States of America},
 journal = {Proc. Natl. Acad. Sci. USA},
 issn = {0027-8424},
 volume = {59},
 pages = {361--364},
 year = {1968},
 language = {English},
 doi = {10.1073/pnas.59.2.361},
 url = {europepmc.org/articles/pmc224678},
 zbMATH = {3253432},
 Zbl = {0157.43701}
}

@article{H.P.R.2,
 author = {Rosenthal, H. P.},
 title = {On quasi-complemented subspaces of {Banach} spaces, with an appendix on compactness of operators from {{\(L^{p}(\mu)\)}} to {{\(L^{r}(\nu)\)}}},
 fjournal = {Journal of Functional Analysis},
 journal = {J. Funct. Anal.},
 issn = {0022-1236},
 volume = {4},
 pages = {176--214},
 year = {1969},
 language = {English},
 doi = {10.1016/0022-1236(69)90011-1},
 zbMATH = {3295105},
 Zbl = {0185.20303}
}

@article{Ylinen-compactCstar,
 author = {Ylinen, K.},
 title = {Compact and finite-dimensional elements of normed algebras},
 fjournal = {Annales Academiae Scientiarum Fennicae. Series A I},
 journal = {Ann. Acad. Sci. Fenn., Ser. A I},
 volume = {428},
 pages = {37},
 year = {1968},
 language = {English},
 zbMATH = {3268553},
 Zbl = {0167.43301}
}

@article{Ylinen_wcc,
 author = {Ylinen, Kari},
 title = {Weakly completely continuous elements of {{\(C^*\)}}-algebras},
 fjournal = {Proceedings of the American Mathematical Society},
 journal = {Proc. Am. Math. Soc.},
 issn = {0002-9939},
 volume = {52},
 pages = {323--326},
 year = {1975},
 language = {English},
 doi = {10.2307/2040155},
 keywords = {46L05},
 zbMATH = {3488052},
 Zbl = {0312.46075}
}

@article{EmerickFranca2026,
  author  = {Emerick, P. H. R. and Fran\c{c}a, W.},
  title   = {On {(Non)}Lineability of Some Subsets of {$\ell_1$}},
  journal = {Bull. Braz. Math. Soc. (N.S.)},
  year    = {2026},
  doi     = {10.1007/s00574-025-00493-0}
}

@article{Gelfan_duality_NOT,
 author = {Corach, Gustavo and Su{\'a}rez, Fernando Daniel},
 title = {Extension of characters in commutative {Banach} algebras},
 fjournal = {Studia Mathematica},
 journal = {Stud. Math.},
 issn = {0039-3223},
 volume = {85},
 pages = {199--202},
 year = {1987},
 language = {English},
 doi = {10.4064/sm-85-2-199-202},
 keywords = {46J05,46A22},
 url = {https://eudml.org/doc/218746},
 zbMATH = {4007002},
 Zbl = {0621.46047}
}
\end{document}